\newcommand{\normsup}[1]{\ensuremath{\!|\!| #1 | \! |_{\infty}}}
\newcommand{\normone}[1]{\ensuremath{\!|\!| #1 | \! |_{1}}}
\newcommand{\normtwo}[1]{\ensuremath{\!|\!| #1 | \! |_{2}}}
\newcommand{\normqj}[1]{\ensuremath{\!|\!| #1 | \! |_{q_j}}}
\newcommand{\normqjd}[1]{\ensuremath{\!|\!| #1 | \! |_{q_j}^*}}
\newcommand{\norm}[1]{\ensuremath{\!|\!| #1 | \! |}}
\newcommand{\normpj}[1]{\ensuremath{\!|\!| #1 | \! |_{p_j}}}
\newcommand{\inprod}[2]{\ensuremath{\langle #1 , \, #2 \rangle}}
\newtheorem{theorem}{Theorem}[section]
\newtheorem{lemma}{Lemma}[section]
\newcommand{\1}{{\rm 1}\mskip -4,5mu{\rm l} }
\newcommand{\argmin}{\mathop{\mathrm{arg\,min}}}
\def\R{\mathbb{R}}
\newcommand{\rp}{\mathbb{R}^p}
\newcommand{\rn}{\mathbb{R}^n}
\newcommand{\est}{\widehat\beta}
\newcommand{\betatrue}{\beta^*}
\newcommand{\lo}{\overline{\lambda}}
\newcommand{\genbeta}{\textcolor{black}{\beta}}
\newcommand{\genconstant}{\textcolor{black}{\ensuremath{u}}}
\newcommand{\riskconst}{\textcolor{black}{\ensuremath{a}}}
\newcommand{\Riskconst}{{\ensuremath{\bf \riskconst}}}
\newcommand{\Loss}{\textcolor{black}{\ensuremath{L}}}
\newcommand{\Lossa}{\textcolor{black}{\ensuremath{L_\Riskconst}}}
\newcommand{\constv}{\textcolor{black}{v}}
\newcommand{\classv}{\ensuremath{\mathcal F_{\constv}}}
\newif\ifgray
\begin{document}
\begin{frontmatter}
\title{Oracle Inequalities for High-dimensional~Prediction}
\runtitle{Oracle Inequalities for Prediction}

\begin{aug}
\author{\fnms{Johannes Lederer${}^{1}$}\ead[label=e1]{ledererj@uw.edu}},
\author{\fnms{Lu Yu${}^2$}\ead[label=e2]{luyu92@uw.edu}},\and
\author{\fnms{Irina Gaynanova${}^3$}\ead[label=e3]{irinag@stat.tamu.edu}}


\address[a]{Department of Statistics\\
University of Washington\\
Box 354322\\
Seattle, WA 98195\\
\printead{e1}}
\address[b]{Department of Statistics\\
University of Washington\\
Box 354322\\
Seattle, WA 98195\\
\printead{e2}}
\address[c]{Department of Statistics\\
Texas A\&M University\\	
3143 TAMU\\	
College Station, TX 77843\\
\printead{e3}}

\runauthor{Lederer, Yu \& Gaynanova}

\affiliation{${}^1$Department of Statistics at the University of Washington\\${}^2$Department of Biostatistics at the University of Washington\\${}^3$Department of Statistics at Texas A\&M University}

\end{aug}

\begin{abstract}~
  The abundance of high-dimensional data in the modern sciences has generated tremendous interest in penalized estimators such as the lasso, scaled lasso, square-root lasso, elastic net, and many others.
  In this paper, we establish a general oracle inequality for prediction in high-dimensional linear regression with such methods.
  Since the proof relies only on convexity and continuity arguments,  the result holds irrespective of the design matrix and applies to a wide range of penalized estimators.
  Overall, the bound demonstrates that generic estimators can provide consistent prediction with any design matrix.
  From a practical point of view, the bound can help to identify the potential of specific estimators, and they can help to get a sense of the prediction accuracy in a given application.
\end{abstract}

\begin{keyword}
\kwd{Oracle inequalities}\kwd{high-dimensional regression}\kwd{prediction}\end{keyword}

\end{frontmatter}

 \newcommand{\outcome}{\textcolor{black}{\ensuremath{Y}}}
\newcommand{\design}{\textcolor{black}{\ensuremath{X}}}
\newcommand{\noise}{\textcolor{black}{\ensuremath{\varepsilon}}}
\newcommand{\truth}{\textcolor{black}{\ensuremath{\beta^*}}}

\newcommand{\tuningparameter}{\ensuremath{\lambda}}
\newcommand{\estimator}{\ensuremath{\widehat \beta}}
\newcommand{\estimatort}{\ensuremath{\estimator^\tuningparameter}}
\newcommand{\tildeestimator}{\ensuremath{\tilde\beta^\tuningparameter}}
\newcommand{\estimatortprime}{\ensuremath{\estimator^{\tuningparameter'}}}

\newcommand{\linkfunction}{\ensuremath{g}}

\newcommand{\numbergroups}{\ensuremath{k}}

\newcommand{\normj}[1]{\textcolor{black}{\ensuremath{\norm{#1}_j}}}
\newcommand{\normjd}[1]{\textcolor{black}{\ensuremath{\norm{#1}_j^*}}}
\newcommand{\normjj}[1]{\textcolor{black}{\ensuremath{\norm{#1}_j^{j-1}}}}
\newcommand{\normjjj}[1]{\textcolor{black}{\ensuremath{\norm{#1}_j^j}}}

\newcommand{\groupset}{\ensuremath{A}}
\newcommand{\groupsets}{\ensuremath{A_1,\dots,A_\numbergroups}}
\newcommand{\groupsize}{\ensuremath{a}}
\newcommand{\groupsizes}{\ensuremath{a_1,\dots,a_\numbergroups}}
\newcommand{\fusedmatrix}{\ensuremath{M}}
\newcommand{\fusedmatrixj}{\ensuremath{M_j}}

\newcommand{\estimatororacle}{\overline \beta}

\newcommand{\projectionmatrix}{P}
\newcommand{\projectionmatrixj}{\textcolor{black}{P_j}}
\newcommand{\projectionmatrixs}{\ensuremath{\textcolor{black}{P_1,\dots,P_\numbergroups}}}

\section{Introduction} Oracle inequalities are the standard theoretical framework for measuring the accuracy of high-dimensional estimators~\cite{Buhlmann11}. Two main benefits of oracle inequalities are that they hold for finite sample sizes and that they adapt to the underlying model parameters. Oracle inequalities are thus used, for example, to compare estimators and to obtain an idea of the sample size needed in a specific application.

\newcommand{\sbo}{sparsity bound}
\newcommand{\pbo}{penalty bound}
\newcommand{\Pbo}{Penalty bound}
\newcommand{\sbos}{sparsity bounds}
\newcommand{\Sbos}{Sparsity bounds}
\newcommand{\pbos}{penalty bounds}

For high-dimensional prediction, there are two types of oracle inequalities:  so-called fast rate bounds and so-called slow rate bounds. Fast rate bounds hold for near orthogonal designs and bound the prediction error in terms of the sparsity of the regression vectors. Such bounds have been derived for a number of methods, including the lasso, the square-root lasso, and their extensions to grouped variables, see~\cite{Belloni11,Buhlmann11,Bunea_Tsybakov_Wegkamp07,Hastie15,Yoyo13,vdGeer11} and others. Slow rate bounds, on the other hand, hold for any design and bound the prediction error in terms of the penalty value of the regression vectors. Some examples of such bounds have been developed~\cite{MM11,Kolt10,RigTsy11}, but a general theory has not been established. Importantly, unlike the unfortunate naming suggests, slow rate bounds are of great interest. In particular, slow rate bounds are not inferior to fast rate bounds,  quite in contrast~\cite{ArnakMoYo14,Hebiri13}:  (i)~Slow rate bounds hold for any design, while fast rate bounds impose strong and in practice unverifiable assumptions on the correlations in the design. (ii)~Even if the assumptions hold,  fast rate bounds can contain unfavorable factors, while the factors in slow rate bounds are small, global constants. (iii)~Also in terms of rates, slow rate bounds can outmatch even the most favorable fast rate bounds. See~\cite{ArnakMoYo14} and references therein for a detailed comparison of the two types of bounds. To avoid confusion in the following, we will use the terms \emph{\pbos}  instead of slow rate bounds  and \emph{\sbos} instead of fast rate bounds.

In this paper, we develop a general \pbo\ for prediction in high-dimensional linear regression. 
This oracle inequality holds for any sample size, design, and noise distribution, and it applies to a very general family of estimators.
For established estimators such as the lasso and the square-root lasso, the result does not imply new guarantees, but it unifies existing bounds in a concise fashion.
In general, the result is a convenient resource for \pbos, and it demonstrates that prediction guarantees hold broadly in high-dimensional regression.

The organization of the paper is as follows. Below, we introduce the setting and notation and establish relationships to existing work. In Section~\ref{sec:general}, we state the general result. In Section~\ref{sec:examples}, we specialize this result to specific estimators, including lasso, square-root lasso, group square-root lasso, and others. In Section~\ref{sec:discussion}, we conclude with a brief discussion. The proofs are deferred to the Appendix.

The ordering of the following sections is geared towards readers that wish to dive into the technical aspects right away. For getting a first overview instead, one can have a quick glance at the model and the estimators in Displays~\eqref{eq:model} and~\eqref{eq:estimator}, respectively, and then skip directly to the examples in Section~\ref{sec:examples}.
\subsection*{Setting and Notation}\label{sec:setting}
\paragraph{Model} We consider linear regression models of the form
\begin{equation}
  \label{eq:model}
  \outcome= \design\truth +\noise
\end{equation}
with outcome $\outcome\in\rn$, design matrix $\design\in\R^{n\times p}$, regression vector $\truth\in\rp$, and noise vector $\noise\in\rn$. Our goal is prediction, that is, estimation of $\design\truth.$ We allow for general design matrices~$\design$, that is, we do not impose conditions on the correlations in~\design. Moreover, we allow for general  noise~\noise, that is, we do not restrict ourselves to specific distributions for~\noise.

\paragraph{Estimators} We are particularly interested in high-dimensional settings, where the number of parameters~$p$ rivals or even exceeds the number of samples~$n$. As needed in such settings, we assume that the regression vector~$\truth$ has some additional structure. This structure can be exploited by penalized estimators, which are the most standard methods for prediction in this context. We thus consider estimators of the form
\begin{equation*}
  \estimatort \in \argmin_{\beta\in\rp}\left\{\linkfunction\big(\,\normtwo{\outcome-\design\beta}^2\big)+\operatorname{pen}(\tuningparameter,\beta)\right\}\,,
\end{equation*}
where $\linkfunction$ is some real-valued link function, the mapping $\operatorname{pen}(\tuningparameter,\beta):\R^k\times\R^p\to\R$ accounts for the structural assumptions, and \tuningparameter\ is a vector-valued tuning parameter. More specifically, we consider assumptions on $\truth$ that are captured by (semi-)norms; the corresponding estimators then read 
\begin{equation}
  \label{eq:estimator}
  \estimatort \in \argmin_{\beta\in\rp}\big\{\linkfunction\big(\,\normtwo{\outcome-\design\beta}^2\big)+\sum_{j=1}^\numbergroups\tuningparameter_j\,\normqj{\fusedmatrix_j\beta}\big\}\,.
\end{equation}
To derive results that are specific enough to be useful in concrete examples, we impose some additional conditions in the following.

\paragraph{Link Function} The link function~$\linkfunction:\R\to[0,\infty)$ satisfies $\linkfunction(0)=0,$ is continuous and strictly increasing on $[0,\infty)$, and  is continuously differentiable on~$(0,\infty)$ with strictly positive and non-increasing derivative $\linkfunction'(x):=\frac{d}{dy}\linkfunction(y)\big\lvert_{y=x}.$ Moreover, the function $\R^n\to[0,\infty):$ $\alpha\mapsto g(\,\normtwo{\alpha}^2)$ is assumed to be strictly convex. The most important examples of link functions are $\linkfunction(x)=x$ and $\linkfunction(x)=\sqrt x.$

\paragraph{Form of the Penalties} We assume that the penalties are composite norms.  First, we assume that the tuning parameter $\tuningparameter:=(\tuningparameter_1,\dots,\tuningparameter_k)^\top$ is in $(0,\infty)^k.$  We then assume that the matrices $\fusedmatrix_1,\dots,\fusedmatrix_\numbergroups\in\R^{p\times p}$ satisfy $\bigcap_{j=1}^\numbergroups \operatorname{Ker}(\fusedmatrix_j)=\{\mathbf 0_{p\times p}\},$ where $\operatorname{Ker}$ denotes the kernel of a matrix. This assumption is mild, simply stating that the row space of all matrices $\fusedmatrix_1,\dots,\fusedmatrix_\numbergroups$ combined span~$\R^p$, that is, each parameter is covered by some penalization.  In the simplest cases, the matrices  equal the identity matrix. In general, however, these matrices allow for the incorporation of complex structural assumptions. For example, group structures can be modeled by taking the matrices $\fusedmatrix_1,\dots,\fusedmatrix_\numbergroups$ equal to (arbitrarily overlapping) block-diagonal matrices with some of the blocks in each $\fusedmatrix_j$ equal to zero. Finally, the single norms $\normqj{\cdot}$ with $q_j\geq 1$ are the regular $\ell_{q_j}$-norms on~$\R^{p}.$ Their dual norms are denoted by~$\normqjd{\cdot}$, and it holds that $\normqjd{\cdot}=\normpj{\cdot}$ for $p_j\in[1,\infty]$ such that ${1}/{p_j} + {1}/{q_j} =1.$ Since each $\normqj{\cdot}$ is a norm, and since the rows of the matrices $\fusedmatrixj$ span the entire~$\R^p$, the penalty is indeed a norm.


\paragraph{Treatment of Overlap} We introduce some further notation to make our result sharp also in cases where variables are subject to more than one penalty term, such as in the overlapping group (square-root) lasso.
For this, we first denote by $A^+$ the Moore-Penrose pseudoinverse of a matrix $A$. We then note that by the rank assumption on the matrices~$\fusedmatrix_1,\dots,\fusedmatrix_\numbergroups,$ there are projection matrices~$\projectionmatrixs\in\R^{p\times p}$ such that 
\begin{equation}
\label{eq:partition}
  \sum_{j=1}^\numbergroups\projectionmatrixj\fusedmatrixj^{+}\fusedmatrixj=\operatorname{I}_{p\times p}\,.
\end{equation}
The projection matrices enter the ``empirical process'' terms associated with the tuning parameters and  the prediction bounds. Our results hold for any $\projectionmatrixs$ that satisfy the above equality; however, appropriate choices are needed to obtain sharp bounds.  In generic examples, the choice of \projectionmatrixs\ is straightforward: if $\numbergroups=1$ (see, for example, the lasso, square-root lasso, and fused lasso) or if the row spaces of the matrices $\fusedmatrix_1,\dots,\fusedmatrix_\numbergroups$ are disjoint (see, for example, the group lasso with non-overlapping groups), one can select $\projectionmatrixs=\operatorname{I}_{p\times p}.$ More generally, if $\numbergroups>1$ and some variables are penalized twice (see, for example, the group lasso with overlapping groups), slightly more complicated choices lead to  optimal bounds.

\paragraph{Technical Assumption on the Noise Distributions} We consider general noise distributions; for example, we allow for heavy-tailed noise and for correlations within $\noise$ and between $\noise$ and $\design$. However, we exclude non-generic noise distributions for technical ease. More specifically, we assume that $\outcome\neq \bold{0}_n$ and $\min_{j\in\{1,\dots,\numbergroups\}}\norm{(\design \projectionmatrix_j\fusedmatrix_j^{+})^\top\noise}_{q_j}^*>0$ with probability one. This implies in particular that $\linkfunction'(\,\normtwo{Y-X\estimatort}^2)>0$ with probability one, see Lemma~\ref{re:nonzero} in the Appendix. To illustrate that the assumptions hold in generic cases, note that the $\norm{\cdot}_{q_j}^*$'s are norms, so that the second condition simply states that $(\design \projectionmatrix_j\fusedmatrix_j^{+})^\top\noise\neq {\bf 0}_p$, $j\in\{1,\dots,\numbergroups\},$ with probability one. As an example, consider now the non-overlapping group penalty, which corresponds to standard group lasso/square-root lasso. One can then check readily that the  assumption is equivalent to $\design^\top\noise\neq {\bf 0}_p$ holding with probability one, which is satisfied for any generic continuous distributions of $\design$ and~$\noise$. It is also straightforward to relax the condition to hold only with probability $1-\kappa,$ $\kappa\equiv \kappa(n)\to 0$ as $n\to\infty,$ to include discrete noise via standard concentration bounds; we omit the details.\label{endnotesign}

\subsection*{Relations to Existing Literature}
Statistical guarantees for high-dimensional prediction with penalized estimators  are typically formulated in terms of oracle inequalities. A variety of oracle inequalities is known, we refer to~\cite{Buhlmann13b,Buhlmann11,Hastie15} and references therein, and for some cases, also corresponding concentration inequalities are available~\cite[Theorem~1.1]{Chatterjee14}. However, most of these bounds impose severe constraints on the model, such as eigenvalue-type conditions on the design matrix~\cite{Bunea_Tsybakov_Wegkamp07,vandeGeer07} --- see~\cite{ArnakMoYo14,Hebiri13} for in-depth comparisons of \pbos\ and \sbos\ in the lasso case. In strong contrast, we are interested in oracle inequalities that do not involve additional constraints. Such results are known both for the penalized formulation of the lasso~\cite{MM11,Kolt10,RigTsy11} and for the constraint formulation~\cite{Chatterjee13}. We are interested in studying whether this type of guarantees can be established more generally in high-dimensional regression.

To achieve this generality, we introduce arguments mainly based on convexity and continuity. \label{convexityintro} Convexity has been used previously to establish $\ell_\infty$- and support recovery guarantees for the lasso~\cite{Wainwright09}, prediction bounds for the constraint version of the lasso~\cite{Chatterjee13} and for the lasso in transductive and semi-supervised learning~\cite{Bellec16b},  guarantees for low-rank matrix completion~\cite{Kolt10}, and bounds for sparse recovery via entropy penalization~\cite{Koltchinskii09c}. The arguments  in~\cite{Wainwright09} are different from ours in that they have different objectives and lead to stringent assumptions on the design. Intermediate results in~\cite[Page~6]{Chatterjee13} for the constraint lasso can be related to parts of our approach when our proof is specialized to the penalized lasso (cf. our Pages~\pageref{page:argument}-\pageref{page:last}); yet, the strategy in~\cite{Chatterjee13} based on a projection argument is different from ours, and extensions of that argument to the penalized lasso and, more generally, to our framework with multiple tuning parameters/penalty terms and different link functions do not seem straightforward. 
Finally, the convexity arguments in~\cite[Proof of Lemma~1]{Bellec16b}, \cite[Equation~(2.6)]{Kolt10}, and \cite[Inequalities~(3.1) and~(3.2)]{Koltchinskii09c} can be related to some of the techniques on Pages~\pageref{page:argument}-\pageref{page:last}.

Our continuity arguments evolve around Brouwer's fix-point theorem. As intermediate steps, we show that suitable tuning parameters exist in the first place and that the unbounded set $(0,\infty)^\numbergroups$ of tuning parameters can be replaced by a bounded set. These facts are known for the lasso and the square-root lasso, see~\cite{Belloni11} and others, but they are novel and non-trivial in the general case and might thus be of interest by themselves.

Our result specializes correctly and confirms existing expectations.
As one example, our bounds for the penalized version of the lasso match the corresponding bounds in the literature~\cite{Kolt10,RigTsy11} and relate to similar expectation-type results~\cite{MM11}.
As another example, the bounds for the (group) square-root lasso match the results of the (group) lasso, complementing previous findings that show the correspondence of the two methods in oracle inequalities under additional constraints~\cite{Belloni11,Yoyo13}.

We also mention that our bounds are near rate-optimal in the absence of further assumptions. Indeed, it has been shown that the rates for lasso prediction \cite[Proposition~4]{ArnakMoYo14} (even when the noise is Gaussian) can not be improved in general beyond $1/\sqrt n$, which corresponds to our bound up to log-factors --- see the Examples section. Under RE-type assumptions~\cite{Sara09}, one can find the rate ${s\log p}/{n}$ for the lasso prediction error, where $s$ is the number of non-zero elements in $\truth$. In the case where $s$ is small, this can be a substantial improvement over the $\sqrt{\log p/n}\,\normone{\betatrue}$-rate. Refined versions, allowing for a potentially large number of small entries in \truth, can be found in~\cite{vdGeer11}. However, RE-type assumptions are very restrictive and often seem unlikely to hold in practice. We come back to this issue in the Discussion section.

We finally relate to oracle inequalities for objectives different from prediction. Besides prediction, standard goals include variable selection and $\ell_1$-, $\ell_2$-, and $\ell_\infty$-estimation, we refer again to~\cite{Buhlmann11,Hastie15} and references therein. These objectives necessarily involve strict assumptions on the design and are thus not of major relevance here. More closely related to our work is out-of-sample prediction, which --- on a high level --- can be thought of as being somewhere between estimation and prediction. Classical results demonstrate that the  lasso can achieve out-of-sample prediction without further assumptions on the design~\cite{Greenshtein04}. Another similarity to prediction is that consistency guarantees for constraint lasso in out-of-sample prediction can be formulated in terms of $\ell_1$-balls of predictors~\cite[Theorems~1 and~3]{Greenshtein04}. (The $\ell_1$-balls in~\cite{Greenshtein04} are, however, more restrictive than the $\ell_1$-balls needed for prediction). On the other hand, the optimality of the $\ell_1$-related rates suggested in~\cite{Greenshtein04} and the applicability of the results to other estimators considered in our paper remain open questions.

\newcommand{\tuningrelax}{\textcolor{black}{\ensuremath{c}}}
\section{General Result}\label{sec:general}
We now state the general oracle inequality for the framework described on Pages~\pageref{sec:setting}--\pageref{endnotesign}. For this, we first have to discuss the existence of suitable tuning parameters. The valid tuning parameters in standard results for the lasso, for example, are of the form $\tuningrelax\,\normsup{\design^\top\noise},$ where the factor $\tuningrelax\geq 2$ depends on the specific type of oracle inequality ($\tuningrelax=2$ for the standard \pbos; $\tuningrelax>2$ for the standard sparsity prediction or estimation bounds, see~\cite{Bickel09,Chichignoud_Lederer_Wainwright14} and others). To show that there is a corresponding range for the tuning parameters in our general bounds, we derive the following result.
\begin{lemma}[Existence]\label{re:existence}
Consider fixed constants $\tuningrelax_1,\dots,\tuningrelax_\numbergroups\in(0,\infty)$. With probability one, there is a tuning parameter $\tuningparameter\equiv\tuningparameter(\tuningrelax_1,\dots,\tuningrelax_\numbergroups)\in (0,\infty)^\numbergroups$ such that
\begin{equation*}
\frac{\tuningparameter}{2g'(\,\normtwo{\outcome-\design\estimatort}^2)}=\left(\tuningrelax_1\,\norm{(\design  \projectionmatrix_1\fusedmatrix_1^{+})^\top \noise}_{q_1}^*,\dots,\tuningrelax_\numbergroups\,\norm{(\design  \projectionmatrix_\numbergroups\fusedmatrix_\numbergroups^{+})^\top \noise}_{q_\numbergroups}^*\right)^\top.
\end{equation*}
\end{lemma}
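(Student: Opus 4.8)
The plan is to recast the asserted identity as a one-dimensional fixed-point problem. Abbreviate $r_j := \tuningrelax_j\,\norm{(\design\projectionmatrix_j\fusedmatrix_j^{+})^\top\noise}_{q_j}^*$; by the technical assumption on the noise and since $\tuningrelax_j>0$, the vector $r := (r_1,\dots,r_\numbergroups)^\top$ lies in $(0,\infty)^\numbergroups$ with probability one. The right-hand side of the claimed identity is then exactly $r$, and since $\linkfunction'(\normtwo{\outcome - \design\estimatort}^2)$ is a single positive scalar common to all $\numbergroups$ coordinates, any valid $\tuningparameter$ must be a positive scalar multiple of $r$. Writing $\tuningparameter = t\,r$ with $t > 0$, the identity collapses to the scalar equation $t = 2\linkfunction'\big(\normtwo{\outcome - \design\,\estimator^{tr}}^2\big)$. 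So it suffices to exhibit a fixed point $t^\star > 0$ of the map $\psi(t) := 2\linkfunction'\big(\normtwo{\outcome - \design\,\estimator^{tr}}^2\big)$; then $\tuningparameter := t^\star r$ proves the lemma after rearranging.

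First I would check that $\psi$ is well defined and continuous on $(0,\infty)$. Strict convexity of $\alpha \mapsto \linkfunction(\normtwo{\alpha}^2)$ forces the fitted value $\design\,\estimator^{tr}$ to be unique (two distinct minimizers would make the objective affine in the fitted value along the connecting segment, contradicting strict convexity unless the fitted values coincide), so $\rho(t) := \normtwo{\outcome - \design\,\estimator^{tr}}^2$ is a well-defined number, and $\rho(t) > 0$ by the noise assumption together with Lemma~\ref{re:nonzero}, so $\linkfunction'(\rho(t))$ is meaningful. For continuity, the objective in~\eqref{eq:estimator} is jointly continuous in $(t,\beta)$, convex in $\beta$, and on any compact subinterval of $(0,\infty)$ uniformly coercive in $\beta$ --- the penalty is $t$ times $\beta \mapsto \sum_j r_j\,\normqj{\fusedmatrix_j\beta}$, which is a norm because $\bigcap_j \operatorname{Ker}(\fusedmatrix_j) = \{\mathbf 0\}$. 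Then a routine stability argument (bound the minimizers on a compact $t$-interval, extract a convergent subsequence, identify its limit as a minimizer of the limiting problem via joint continuity, invoke uniqueness of the fitted value, and note that every subsequence admits such a further subsequence) shows that $t \mapsto \design\,\estimator^{tr}$, hence $\rho$, hence $\psi = 2\linkfunction' \circ \rho$, is continuous.

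Next I would control the boundary behaviour and close with the intermediate value theorem --- the one-dimensional instance of the Brouwer-type argument used throughout the paper. Comparing the optimal value with the value at $\beta = \mathbf 0$ gives $\linkfunction(\rho(t)) \le \linkfunction(\normtwo{\outcome}^2)$, so $\rho(t) \le \normtwo{\outcome}^2$ for every $t > 0$; since $\linkfunction'$ is non-increasing, this yields the uniform lower bound $\psi(t) \ge 2\linkfunction'(\normtwo{\outcome}^2) =: m > 0$. The same value comparison also gives $t\sum_j r_j\,\normqj{\fusedmatrix_j\estimator^{tr}} \le \linkfunction(\normtwo{\outcome}^2)$, so $\estimator^{tr} \to \mathbf 0$ and $\rho(t) \to \normtwo{\outcome}^2 > 0$ as $t \to \infty$, whence $\psi(t) \to m$ by continuity of $\linkfunction'$ at $\normtwo{\outcome}^2$. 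Consequently $t - \psi(t)$ is strictly negative at $t = m/2$ (there $\psi(t) \ge m > m/2$) and strictly positive for all sufficiently large $t$, so by continuity it vanishes at some $t^\star > 0$; setting $\tuningparameter := t^\star r$ finishes the proof, with everything taking place on the probability-one event furnished by the noise assumption.

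The step I expect to be the main obstacle is the continuity of $t \mapsto \rho(t)$: because $\estimator^{tr}$ itself need not be unique --- only its fitted value is --- one cannot simply invoke continuity of an $\argmin$ map, and the argument must be run at the level of fitted values. A secondary point requiring care is that $\linkfunction'$ may blow up at $0$, so the construction must be kept strictly inside the domain $(0,\infty)$ of $\linkfunction'$; this is exactly what the a priori bounds $0 < \rho(t) \le \normtwo{\outcome}^2$ (and hence $\psi \ge m > 0$) guarantee.
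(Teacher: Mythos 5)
Your reduction is correct, and it is a genuinely different route from the paper's. You exploit the fact that the right-hand side is a \emph{fixed} vector $r=(\tuningrelax_1\,\norm{(\design\projectionmatrix_1\fusedmatrix_1^{+})^\top\noise}_{q_1}^*,\dots,\tuningrelax_\numbergroups\,\norm{(\design\projectionmatrix_\numbergroups\fusedmatrix_\numbergroups^{+})^\top\noise}_{q_\numbergroups}^*)^\top$ not depending on $\tuningparameter$, so the $\numbergroups$-dimensional fixed-point problem collapses to the scalar equation $t=2\linkfunction'(\,\normtwo{\outcome-\design\estimator^{tr}}^2)$, which you solve by the intermediate value theorem using the two boundary facts $\psi(t)\ge 2\linkfunction'(\,\normtwo{\outcome}^2)>0$ (from comparing with $\beta=\mathbf 0_p$ and monotonicity of $\linkfunction'$) and $\psi(t)\to 2\linkfunction'(\,\normtwo{\outcome}^2)$ as $t\to\infty$ (from $\estimator^{tr}\to\mathbf 0_p$, which forces $\rho(t)\to\normtwo{\outcome}^2>0$). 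The paper instead stays in $\numbergroups$ dimensions: it first shows the solution set is exactly $\{\mathbf 0_p\}$ once all $\tuningparameter_j$ exceed an explicit threshold $m$, then that raising coordinates beyond $m$ does not change $\design\estimatort$, and finally applies Brouwer's fixed-point theorem to a self-map of the compact box $[a,b\vee m]^\numbergroups$. Your approach buys elementarity (IVT instead of Brouwer, no analogue of the paper's Steps 1--2 needed) and your continuity argument — uniform coercivity on compact $t$-intervals, boundedness of minimizers, subsequence extraction, identification of the limit, and uniqueness of fitted values via strict convexity of $\alpha\mapsto\linkfunction(\,\normtwo{\alpha}^2)$ — is a clean compactness-based substitute for the paper's Lemma~\ref{re:functions} (which argues by contradiction) while correctly working at the level of fitted values rather than minimizers; note you still use the content of Lemma~\ref{re:convexity} and Lemma~\ref{re:nonzero}, which you respectively reprove and cite. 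What the paper's heavier machinery buys is robustness and by-products: the Brouwer setup would survive situations where the target vector itself varies with $\tuningparameter$ coordinate-wise, and Steps 1--2 yield the explicit bounded range of valid tuning parameters that the authors highlight as being of independent interest; for the lemma as stated, however, your one-dimensional argument is sound and simpler.
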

\noindent This proof of existence ensures that  suitable tuning parameters exist for arbitrary estimators of the form~\eqref{eq:estimator}. If $g:x\mapsto x,$ Lemma~\ref{re:existence} can be verified easily. In particular, the above equation simplifies to $\tuningparameter=2\tuningrelax_1\,\normsup{\design^\top\noise}$ for the lasso. In general, however, the statement is more intricate, and there might be several tuning parameters that satisfy the equality.  
Our proof is, therefore, more involved, invoking continuity arguments and Brouwer's fixed-point theorem, see Appendix.


One can also replace the implicit equalities in Lemma~\ref{re:existence} by explicit bounds on the tuning parameters. Note first that $g'(\,\normtwo{\outcome-\design\estimatort}^2)$ is not monotone in the tuning parameter in general. However, this problem can be circumvented by deriving concentration bounds. More specifically,  for any link function~\linkfunction\ and even for heavy-tailed noise distributions, one can use empirical process theory (such as~\cite{YoyoSara12}, for example) to derive concentration bounds for $|\linkfunction'\big(\,\normtwo{\outcome-\design\estimatort}^2\big)- \linkfunction'\big(\,\normtwo{\noise}^2\big)|$ (very rough bounds are sufficient). This then implies bounds for the left-hand side in Lemma~\ref{re:existence}, and consequently, allows one to replace the implicit inequalities by explicit lower bounds on the tuning parameters.

\pagebreak

We are now ready to state the main result.
\begin{theorem}[\Pbo]\label{r:finalbound}
For any choice of $\tuningrelax_1,\dots,\tuningrelax_\numbergroups\in(0,\infty)$, with probability one, the estimator~$\estimatort$ defined by~\eqref{eq:estimator} with tuning parameter $\tuningparameter$ as in Lemma~\ref{re:existence} above satisfies the prediction bound
 \begin{multline*}
\frac{1}{n}\,\normtwo{\design(\betatrue-\estimatort)}^2\le\inf_{\substack{\genconstant\in(0,1)\\\genbeta\in\R^p}} \Big\{\frac{1}{4\genconstant(1-\genconstant)n}\,\normtwo{\design(\betatrue-\genbeta)}^2\\+\frac{1}{n}\sum_{j=1}^{\numbergroups}\frac{1+\tuningrelax_j}{1-\genconstant}\,  \normqjd{(\design  \projectionmatrix_j\fusedmatrix_j^{+})^\top\noise}  \, \normqj{\fusedmatrix_j \genbeta}- \frac{1}{n}\sum_{j=1}^{\numbergroups}\frac{\tuningrelax_j-1}{1-\genconstant}\, \normqjd{(\design  \projectionmatrix_j\fusedmatrix_j^{+})^\top\noise} \,  \normqj{\fusedmatrix_j \estimatort}\Big\}\,.
\end{multline*}
\end{theorem}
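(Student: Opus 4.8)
The plan is to read the bound off the first-order optimality condition of the convex objective in~\eqref{eq:estimator}, using the partition of identity~\eqref{eq:partition}, H\"older's inequality, and a Young-type inequality with the free parameter~$\genconstant$ as the only genuine ingredients. Everything takes place on the probability-one event on which Lemma~\ref{re:existence} supplies the tuning parameter~$\tuningparameter$ and on which the technical noise assumption holds; there, Lemma~\ref{re:nonzero} gives $\hat g:=\linkfunction'\bigl(\normtwo{\outcome-\design\estimatort}^2\bigr)>0$, which is what makes the optimality condition below have the stated form.

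First I would use that, since $\hat g>0$, the smooth part of the objective is differentiable at $\estimatort$ with gradient $-2\hat g\,\design^\top(\outcome-\design\estimatort)$, so that $2\hat g\,\design^\top(\outcome-\design\estimatort)$ is a subgradient at $\estimatort$ of the convex penalty $\beta\mapsto\sum_{j}\tuningparameter_j\normqj{\fusedmatrixj\beta}$. The subgradient inequality then yields, for every competitor $\genbeta\in\R^p$,
\[
2\hat g\,\inprod{\design^\top(\outcome-\design\estimatort)}{\estimatort-\genbeta}\le\sum_{j=1}^{\numbergroups}\tuningparameter_j\bigl(\normqj{\fusedmatrixj\genbeta}-\normqj{\fusedmatrixj\estimatort}\bigr).
\]
Substituting $\outcome-\design\estimatort=\design(\betatrue-\estimatort)+\noise$ and $\design(\estimatort-\genbeta)=\design(\betatrue-\genbeta)-\design(\betatrue-\estimatort)$, dividing by $2\hat g>0$, and rearranging isolates $\normtwo{\design(\betatrue-\estimatort)}^2$ on the left and leaves, on the right, a cross term $\inprod{\design(\betatrue-\estimatort)}{\design(\betatrue-\genbeta)}$, an ``empirical process'' term $\inprod{\noise}{\design(\estimatort-\genbeta)}$, and $\sum_j\tfrac{\tuningparameter_j}{2\hat g}\bigl(\normqj{\fusedmatrixj\genbeta}-\normqj{\fusedmatrixj\estimatort}\bigr)$. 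For the empirical process term I would insert $\estimatort-\genbeta=\sum_j\projectionmatrixj\fusedmatrixj^{+}\fusedmatrixj(\estimatort-\genbeta)$ from~\eqref{eq:partition}, apply H\"older's inequality with the dual pairs $(\normqj{\cdot},\normqjd{\cdot})$, and then the triangle inequality, to get $\inprod{\noise}{\design(\estimatort-\genbeta)}\le\sum_j\normqjd{(\design\projectionmatrixj\fusedmatrixj^{+})^\top\noise}\bigl(\normqj{\fusedmatrixj\estimatort}+\normqj{\fusedmatrixj\genbeta}\bigr)$. Next I would plug in the defining identity of Lemma~\ref{re:existence}, $\tuningparameter_j/(2\hat g)=\tuningrelax_j\,\normqjd{(\design\projectionmatrixj\fusedmatrixj^{+})^\top\noise}$, so that the two families of penalty terms merge into $\sum_j\normqjd{(\design\projectionmatrixj\fusedmatrixj^{+})^\top\noise}\bigl[(1+\tuningrelax_j)\normqj{\fusedmatrixj\genbeta}-(\tuningrelax_j-1)\normqj{\fusedmatrixj\estimatort}\bigr]$. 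Finally I would bound the cross term with Cauchy--Schwarz and $ab\le\genconstant a^2+b^2/(4\genconstant)$ by $\genconstant\normtwo{\design(\betatrue-\estimatort)}^2+\tfrac1{4\genconstant}\normtwo{\design(\betatrue-\genbeta)}^2$, move $\genconstant\normtwo{\design(\betatrue-\estimatort)}^2$ to the left-hand side, divide by $1-\genconstant>0$ and by $n$, and take the infimum over $\genconstant\in(0,1)$ and $\genbeta\in\R^p$; this is exactly the claimed inequality.

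I do not expect a deep obstacle here, since every step is short once Lemma~\ref{re:existence} is available; the care goes into the bookkeeping. The point most easily mishandled is the first one: the stated optimality condition and the division by $2\hat g$ require $\hat g>0$, which is precisely where the technical noise assumption and Lemma~\ref{re:nonzero} are needed. A second point is that the partition-of-identity splitting must be done \emph{before} applying H\"older, so that each block $(\design\projectionmatrixj\fusedmatrixj^{+})^\top\noise$ is paired with its own dual norm and no stray constants appear. Finally, the sharp leading constant $\tfrac1{4\genconstant(1-\genconstant)}$ (rather than a plain factor) comes exactly from the freedom to choose $\genconstant$ in Young's inequality and then renormalize, and that renormalization is also what puts the $\tfrac1{1-\genconstant}$ in front of the penalty terms.
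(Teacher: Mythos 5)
Your proposal is correct and follows essentially the same route as the paper's own proof: the subgradient/KKT condition at \estimatort, substitution of the model, Young's inequality with the free parameter \genconstant, division by $2\linkfunction'(\,\normtwo{\outcome-\design\estimatort}^2)>0$ via Lemma~\ref{re:nonzero}, the partition-of-identity~\eqref{eq:partition} followed by H\"older and the triangle inequality, the tuning-parameter identity of Lemma~\ref{re:existence}, and finally division by $(1-\genconstant)n$ and taking the infimum. The only blemish is a sign slip in your displayed subgradient inequality, which should read $2\linkfunction'(\,\normtwo{\outcome-\design\estimatort}^2)\,\inprod{\design^\top(\outcome-\design\estimatort)}{\genbeta-\estimatort}\le\sum_{j=1}^{\numbergroups}\tuningparameter_j\bigl(\,\normqj{\fusedmatrixj\genbeta}-\normqj{\fusedmatrixj\estimatort}\bigr)$ (with $\genbeta-\estimatort$, not $\estimatort-\genbeta$); your subsequent rearrangement --- isolating $\normtwo{\design(\betatrue-\estimatort)}^2$ as an upper bound with the cross term and the empirical-process term on the right --- is exactly what the corrected direction yields, so the argument goes through unchanged.
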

\noindent  This oracle inequality provides bounds for the prediction errors of the estimators~\eqref{eq:estimator}. Our proofs are based only on convexity and continuity arguments, making the result very general and sharp in its constants.

Let us provide some interpretations of the result. 
Note first that the bounds apply to any positive $\tuningrelax_1,\dots,\tuningrelax_\numbergroups,$ but the most interesting case is $\tuningrelax_1,\dots,\tuningrelax_\numbergroups\geq 1$, since the terms on the right-hand side that depend on the estimator itself can then be dropped. 
We first consider a tuning parameter $\lo\in (0,\infty)^\numbergroups$  that satisfies the equality in Lemma~\ref{re:existence} with $\tuningrelax_1,\dots,\tuningrelax_\numbergroups=1$.
We set $\estimatororacle:=\est^{\lo}$ for ease of notation. Now, choosing $\genconstant=0.5$ in the above functional implies
\begin{equation}\label{special1}
  \frac{1}{n}\,\normtwo{\design(\betatrue-\estimatororacle)}^2\le\min_{\substack{\genbeta\in\R^p}} \Big\{\frac{1}{n}\,\normtwo{\design(\betatrue-\genbeta)}^2+\frac{4}{n}\sum_{j=1}^{\numbergroups}\,  \normqjd{(\design  \projectionmatrix_j\fusedmatrix_j^{+})^\top\noise}  \, \normqj{\fusedmatrix_j \genbeta}\Big\}\,.
\end{equation}
To bring this on a more abstract level, we denote the prediction loss by $\Loss(\genbeta):=\,\normtwo{\design(\betatrue-\genbeta)}^2/n$ and introduce model classes $\classv:=\{\genbeta\in\R^p:4\sum_{j=1}^{\numbergroups}\,  \normqjd{(\design  \projectionmatrix_j\fusedmatrix_j^{+})^\top\noise}  \, \normqj{\fusedmatrix_j \genbeta}/n=\constv\}$ indexed by~$\constv\in[0,\infty).$ Inequality~\eqref{special1} then reads
\begin{equation*}
  \Loss(\estimatororacle)\leq\min_{\constv\in[0,\infty)}\min_{\genbeta\in\classv}\{\Loss(\genbeta)+\constv\}\,.
\end{equation*}
Thus, the estimator $\estimatororacle$ performs as well as the minimizer of the loss over the class~$\classv$ --- up to a complexity penalty of~\classv. For another abstract view on the theorem, we define a loss for any $\Riskconst\in(0,\infty)^\numbergroups$ by
\begin{equation*}
  \Lossa(\beta):=\frac{1}{n}\,\normtwo{\design(\betatrue-\genbeta)}^2 +\frac{1}{n}\sum_{j=1}^{\numbergroups}\riskconst_j \,  \normqj{\fusedmatrix_j \genbeta}\,.
\end{equation*}
This loss balances prediction accuracy against model complexity.
Now, we observe that if $2(\tuningrelax_j-1)\,\normqjd{(\design\projectionmatrix_j\fusedmatrix_j^{+})^\top\noise}=\riskconst_j,$ choosing again $\genconstant=0.5$ in the initial functional yields
\begin{equation*}
  \Lossa(\estimatort)\leq\Big(1+\max_{j\in\{1,\dots,\numbergroups\}}\frac{4\,\normqjd{(\design\projectionmatrix_j\fusedmatrix_j^{+})^\top\noise}}{\riskconst_j}\Big) \min_{\genbeta\in\R^p}\Lossa(\genbeta)\,.
\end{equation*}
This means that the estimator $\estimatort$ performs as well --- again up to constants --- as the minimizer of the loss $\Lossa.$ 
These forms of our bound fit the classical notions of oracle inequalities (with sharp leading constant) in empirical risk minimization~\cite[Chapter~1.1]{koltch11}  and non-parametric estimation~\cite[Chapter~1.8]{Tsybakov09}.
Finally, we consider again~$\lo,$ then set $\genbeta=\betatrue$ and take the limit $\genconstant\to0$ in Theorem~\ref{r:finalbound}. We find
\begin{equation}\label{special2}
  \frac{1}{n}\,\normtwo{\design(\betatrue-\estimatororacle)}^2\le\frac{2}{n}\sum_{j=1}^{\numbergroups}\,  \normqjd{(\design  \projectionmatrix_j\fusedmatrix_j^{+})^\top\noise}  \, \normqj{\fusedmatrix_j \betatrue}\,.
\end{equation}
This form of our bound commensurates with typical formulations of oracle inequalities in high dimensions~\cite[Chapters~2.4.2 and 6.2]{Buhlmann11}; in particular, the bound implies known prediction bounds with correct constants --- see the following section.

The tuning parameter $\lo$ minimizes  $\sum_{j=1}^{\numbergroups}(1+\tuningrelax_j)\,  \normqjd{(\design  \projectionmatrix_j\fusedmatrix_j^{+})^\top\noise}  \, \normqj{\fusedmatrix_j \genbeta}/(n(1-\genconstant)),$ the first term on the right-hand side of the bound, as a function of $\tuningrelax_1,\dots,\tuningrelax_\numbergroups$ under the constraint that $\tuningrelax_1,\dots,\tuningrelax_\numbergroups\geq 1$ (which implies that the  terms on the right-hand side that depend on the estimator can be dropped).  
This choice of the tuning parameter also leads to  rates that have been shown to be near-optimal in certain cases \cite[Proposition~4]{ArnakMoYo14}. 
However, this does not necessarily mean that $\lo$ minimizes the prediction loss $\normtwo{\design(\betatrue-\estimatort)}^2.$ 
Explicit formulations of tuning parameters that minimize the prediction loss are to date unknown. 
Nevertheless, some insights have been developed: for example, \cite{ArnakMoYo14,Hebiri13} discuss  lasso tuning parameters as a function of the correlations in $\design,$ and \cite{Bellec16} and \cite[Section~4]{Sun13} discuss lasso/scaled lasso tuning parameters that can lead to minimax rates in the case of sparsity and small correlations. 
Furthermore, our essay does not provide any guidance on how to select tuning parameters in practice; 
indeed, the tuning parameters in Lemma~\ref{re:existence}  depend on the noise~$\noise$, which is unknown in practice. 
For ideas on the practical selection of the lasso tuning parameter with finite sample guarantees, we refer to~\cite{Chetelat_Lederer_Salmon14,Chichignoud_Lederer_Wainwright14}.
For ideas on how to make the selection of tuning parameters independent of unknown model aspects, we refer to \cite{Giraud12,Lederer14} and the square-root/scaled lasso example in the following section.

We conclude this section highlighting five other properties of Theorem~\ref{r:finalbound} (much of this becomes more lucid in the context of the specific examples discussed in the next section): First, the bound involves the values of the tuning parameters to the power one and holds for any design matrix~\design. The bound contains the penalty values of the regression vectors. Hence, the bounds are \pbos. Second, the oracle inequality holds for any distribution of the noise~\noise. Third, the bounds hold for any sample size $n$; in particular, the bounds are non-asymptotic. Fourth, the bounds become smaller if the correlations in~$\design$ become larger, cf.~\cite{Hebiri13}. Fifth, the link function~$\linkfunction$ appears in the existence result on tuning parameters but not in the prediction bound. This last, interesting point clarifies the role of the link function: its essential purpose is to ``reshuffle'' the tuning parameter path. One can relate this observation to the discussion of the lasso/square-root lasso below.

\section{Examples}\label{sec:examples}
\newcommand\iid{\stackrel{\mathclap{\normalfont\mbox{\scriptsize iid}}}{\sim}}
We now apply our general results to some specific estimators of the form~\eqref{eq:estimator}.

\paragraph{Lasso} The lasso~\cite{Tibshirani96} is defined as
\begin{equation*}
  \estimatort \in \argmin_{\beta\in\rp}\{\,\normtwo{\outcome-\design\beta}^2+\tuningparameter\,\normone{\beta}\}\,.
\end{equation*}
We first show that we can recover the standard penalty bounds that have been derived in the lasso literature, see, for example,~\cite[Eq. (2.3) in Theorem 1]{Kolt10} and~\cite[Equation~(3)]{Hebiri13}. The proofs use that lasso objective function is minimal at~\estimatort, that is, for any $\genbeta\in\R^p,$ it holds that
\begin{equation*}
  \normtwo{\outcome-\design\estimatort}^2+\tuningparameter\,\normone{\estimatort}\leq \normtwo{\outcome-\design\genbeta}^2+\tuningparameter\,\normone{\genbeta}\,.
\end{equation*}
This is equivalent to 
\begin{equation*}
  \normtwo{\outcome-\design\betatrue+\design\betatrue-\design\estimatort}^2+\tuningparameter\,\normone{\estimatort}\leq \normtwo{\outcome-\design\betatrue+\design\betatrue-\design\genbeta}^2+\tuningparameter\,\normone{\genbeta}
\end{equation*}
and
\begin{multline*}
  \normtwo{\outcome-\design\betatrue}^2+2\langle\outcome-\design\betatrue,\design\betatrue-\design\estimatort\rangle+\normtwo{\design\betatrue-\design\estimatort}^2+\tuningparameter\,\normone{\estimatort}\\
\leq \normtwo{\outcome-\design\betatrue}^2+2\langle\outcome-\design\betatrue,\design\betatrue-\design\genbeta\rangle+\normtwo{\design\betatrue-\design\genbeta}^2+\tuningparameter\,\normone{\genbeta}\,.
\end{multline*}
Invoking the model~\eqref{eq:model} and consolidating, this yields
\begin{equation*}
    \normtwo{\design\truth-\design\estimatort}^2\leq \normtwo{\design\betatrue-\design\genbeta}^2+2\inprod{\noise}{\design\estimatort-\design\genbeta}+\tuningparameter\,\normone{\genbeta}-\tuningparameter\,\normone{\estimatort}\,.
\end{equation*}
H\"older's inequality and the triangle inequality then lead to
\begin{equation*}
    \normtwo{\design\truth-\design\estimatort}^2\leq \normtwo{\design\betatrue-\design\genbeta}^2+2\,\normsup{\design^\top\noise}(\,\normone{\estimatort}+\normone{\genbeta})+\tuningparameter\,\normone{\genbeta}-\tuningparameter\,\normone{\estimatort}\,.
\end{equation*}
Hence, for $\lambda=\lo=2\,\normsup{\design^\top\noise},$ we eventually find
\begin{equation*}
   \frac{1}{n}\, \normtwo{\design(\truth-\estimatororacle)}^2\leq\min_{\genbeta\in\R^p}\Big\{\, \frac{1}{n}\,\normtwo{\design(\betatrue-\genbeta)}^2+\frac{4}{n}\,\normsup{\design^\top\noise}\,\normone{\genbeta}\Big\}\,.
\end{equation*}
Observing that $\numbergroups=1$ and $\fusedmatrix_1=\projectionmatrix_1=\operatorname{I}_{p\times p}$ in this example, one can check that \eqref{special1} recovers this bound.

In the case $\genbeta=\betatrue,$ Inequality~\eqref{special2} does slightly better. Indeed, our results imply
\begin{equation*}
\frac{1}{n}\,\normtwo{\design(\betatrue-\estimatororacle)}^2 \le \frac 2 n\,\normsup{\design  ^\top\noise}\,   \normone{\betatrue}\,,
\end{equation*} 
which is smaller by a factor $2$ than the above right-hand side at $\genbeta=\betatrue.$ 
The same bound also follows from~\cite[Inequality~(23)]{ScaledLasso11}.
The key property that allows one to derive bounds with the improved factor is convexity; therefore, in the lasso case, one can also find the above bound with the techniques in the papers mentioned in the corresponding discussion on Page~\pageref{convexityintro}, such as~\cite{Bellec16b,Kolt10,Koltchinskii09c}.

To provide a sense for the rates, we mention that if $\noise_1,\dots,\noise_n\iid\mathcal N(0,\sigma^2) $ and $(\design^\top\design)_{jj}=n$ for all~$j\in\{1,\dots,p\}$, it holds that $\lo\approx \sigma\sqrt{n\log(p)}$ and $\normtwo{\design(\betatrue-\estimatororacle)}^2/n \lesssim \sigma\sqrt{\log(p)/n}\ \normone{\betatrue}$.\footnote{The wiggles indicate that we are interested only in the rough shapes and neglect constants, for example.} Importantly, the rate is lower bounded by $1/\sqrt n$ --- unless further assumptions on the design matrix~\design\ are imposed~\cite[Proposition~4]{ArnakMoYo14}.

We finally note that one can also include ``tailored'' tuning parameters, which corresponds to considering the lasso as a specification of our general framework with $\numbergroups=p$ and not necessarily equal tuning parameters $\tuningparameter_1,\dots,\tuningparameter_p.$ One can check easily that in particular if $(\design^\top\noise)_1,\dots,(\design^\top\noise)_p$ i.i.d, one obtains the same bounds as above.

\paragraph{Square-root/scaled lasso} The square-root lasso~\cite{Belloni11}  reads
\begin{equation*}
  \estimatort \in \argmin_{\beta\in\rp}\left\{\,\normtwo{\outcome-\design\beta}+\tuningparameter\,\normone{\beta}\right\}\,.
\end{equation*}
In our framework, $\numbergroups=1$ and $\fusedmatrix_1=\projectionmatrix_1=\operatorname{I}_{p\times p},$ so that $\lo=\,\normsup{\design^\top\noise}/\,\normtwo{\outcome-\design\estimator^{\lo}}$ and the prediction bound~\eqref{special2} reads
\begin{equation*}
\frac{1}{n}\,\normtwo{\design(\betatrue-\estimatororacle)}^2 \le \frac 2 n\,\normsup{\design  ^\top\noise}\,   \normone{\betatrue}\,.
\end{equation*}
A similar bound is implied by~\cite[Theorem~1]{ScaledLasso11}, and Inequality~\eqref{special1} and the general bound in our main theorem translate accordingly (for ease of comparison, we focus on \eqref{special2} in the following). The bounds match the corresponding ones for the lasso, but the tuning parameters differ. The crux of the square-root lasso, and similarly, the scaled lasso~\cite{ScaledLasso11}, is that their tuning parameters can be essentially independent of the noise variance. For example,  if $\noise_1,\dots,\noise_n\iid\mathcal N(0,\sigma^2)$ and $(\design^\top\design)_{jj}=n$ for all~$j\in\{1,\dots,p\}$, it holds\footnote{see Page~\pageref{sqrtexproof} for some hints} that $\lo\approx \normsup{\design^\top\noise}/\,\normtwo{\noise}\approx\sqrt{\log(p)}$, \label{sqrtex} which is independent of $\sigma.$ Since $\sigma$ is typically unknown in practice, the square-root/scaled lasso can thus facilitate the tuning of $\tuningparameter$.

\paragraph{Slope estimator} The slope estimator~\cite{Bogdan15} can be written as
\begin{equation*}
  \estimatort \in \argmin_{\beta\in\rp}\Big\{\,\normtwo{\outcome-\design\beta}^2+\tuningparameter\sum_{j=1}^p\omega_j|\beta|_{(j)}\Big\}\,,
\end{equation*}
where $|\beta|_{(j)}$ denotes the $j$th largest entry of $\beta$ in absolute value, $\omega_1\geq \dots \geq \omega_p>0$ is a non-increasing sequence of weights, and $\tuningparameter>0$ is a tuning parameter.  A promising case for the slope estimator is the one where $\noise_1,\dots,\noise_n\iid\mathcal N(0,\sigma^2) $ and $(\design^\top\design)_{jj}=n$ for all~$j\in\{1,\dots,p\}.$ The weights can then be chosen as $\omega_j:=2\sigma\sqrt{n\log(2p/j)}$ in the spirit of the Benjamini-Hochberg procedure~\cite{Bogdan15,Su16}, and  a theoretically justified choice of the tuning parameter is $\lambda>4+\sqrt 2$~\cite[Equation~(2.5)]{Bellec16}. In particular, this choice works in the sense of the first part of  Theorem~\ref{r:finalbound}, and one finds the bound  $\,\normtwo{\design(\betatrue-\estimatororacle)}^2/n \lesssim \tuningparameter \sigma\sqrt{\log(p)/n}\ \normone{\betatrue}$, for example, which coincides with the bounds above. Similar considerations apply to the oscar penalty~\cite{Bondell08}.


\paragraph{Elastic net} The elastic net~\cite{Zou05} reads
\begin{equation*}
  \estimatort \in \argmin_{\beta\in\rp}\left\{\,\normtwo{\outcome-\design\beta}^2+\tuningparameter_1\,\normone{\beta}+\tuningparameter_2\,\normtwo{\beta}^2\right\}\,.
\end{equation*}
This is not directly in the form~\eqref{eq:estimator}. However, one can use the usual trick writing the estimator as a lasso with augmented data, cf.~\cite[Lemma~1]{Hebiri11}. Using $\fusedmatrix_1=\projectionmatrix_1=\operatorname{I}_{p\times p}$, our results then hold for any  tuning parameters that satisfy $\tuningparameter_1=2\,\normsup{\design^\top\noise-\tuningparameter_2\betatrue}$. For example, we can set $\lo_2=\argmin_{\tuningparameter_2}\normsup{\design^\top\noise-\tuningparameter_2\betatrue}$ and $\lo_1=2\,\normsup{\design^\top\noise-\lo_2\betatrue}$. The bound in~\eqref{special2}, for example, then reads
\begin{equation*}
\frac{1}{n}\,\normtwo{\design(\betatrue-\estimatororacle)}^2\le \frac{2}{n}\, \normsup{\design^\top\noise-\lo_2\betatrue}\,\normone{\betatrue} \le \frac{2}{n}\,\normsup{\design  ^\top\noise}\,   \normone{\betatrue}\,.
\end{equation*} 
Similar results hold if, for example, the settings with the normal noise vectors described in the two examples above apply and $\lo_2=\mathcal O(\sqrt n).$ The main intent of the elastic net is to improve variable selection. However, our results show that the elastic net with well-chosen tuning parameters also has similar penalty guarantees for prediction as the lasso.

\paragraph{Lasso and square-root lasso with group structures} The estimators considered so far are based on a simple notion of sparsity. In practice, however, it can be reasonable to assume more complex sparsity structures in the regression vector~$\betatrue.$ Estimators that take such structures into account include the group lasso~\cite{YuanLin06}, group square-root lasso~\cite{Yoyo13}, hierarchical group lasso~\cite{Bien13}, lasso with overlapping groups~\cite{Jacob09}, and sparse group lasso~\cite{Simon13}. They all fit our framework. 

In contrast to the examples above, the matrices $\fusedmatrix_{1},\dots,\fusedmatrix_{\numbergroups},\projectionmatrix_1,\dots,\projectionmatrix_\numbergroups$ play a nontrivial role in these examples. On a high level, the matrix~$\fusedmatrix_{j}$ specifies which variables are incorporated in the $j$th group. If groups overlap, the matrix~$\projectionmatrix_j$ specifies which parts of $\design^\top \noise$ are attributed to the  tuning parameter~$\lo_j.$ For example, if the $m$th variable is in the $j$th and $l$th group, the matrices~$\projectionmatrix_j$ and~$\projectionmatrix_l$ can be chosen such that the corresponding element $(\design^\top\noise)_m$ is part of either $\lo_j$ or $\lo_l$ and not in both of them.

As an illustration, let us consider the group lasso with non-overlapping groups:
\begin{equation*}
  \estimatort \in \argmin_{\beta\in\rp}\Big\{\,\normtwo{\outcome-\design\beta}^2+\tuningparameter\sum_{j=1}^\numbergroups\, \normtwo{\beta_{G_j}}\Big\}\,.
\end{equation*}
Here, $G_1,\dots,G_\numbergroups$ is a partition of $\{1,\dots,p\}$ and $(\beta_{G_j})_i:=\beta_i\1\{i\in G_j\}.$ To put the estimator in  framework~\eqref{eq:estimator}, one can either generalize the estimator to incorporate possibly different tuning parameters for each group, set $\fusedmatrix_j$ to~$(\fusedmatrix_j)_{st}=\1\{s=t,s\in G_j\}$, and then choose a dominating tuning parameter, or one can directly extend our results (as mentioned earlier) to arbitrary norm penalties. In any case, the projection matrices/matrix equal the identity matrix, our standard tuning parameter  is $\lo=2\max_{l\in\{1,\dots,\numbergroups\}}\,\normtwo{(\design^\top\noise)_{G_l}}$, and the bound~\eqref{special2} gives  
\begin{equation*}
\frac{1}{n}\,\normtwo{\design(\betatrue-\estimatororacle)}^2 \le \frac{2}{n}\max_{l\in\{1,\dots,\numbergroups\}}\,\normtwo{(\design^\top\noise)_{G_l}} \sum_{j=1}^\numbergroups\,\normtwo{\betatrue_{G_j}} \,.
\end{equation*} 

\paragraph{Trend filtering and total variation/fused penalty} Trend filtering~\cite{Kim09,Tibshirani14} reads
\begin{equation*}
  \estimatort \in \argmin_{\beta\in\rp}\big\{\,\normtwo{\outcome-\beta}^2+\tuningparameter\, \normone{\fusedmatrix\beta}\big\}\,,
\end{equation*}
where  for given $l\in\{1,2,\dots\}$, the matrix~$\fusedmatrix\in\R^{p\times p}$ is defined as $\fusedmatrix:=\underbrace{D\times \cdots \times D}_{l\text{~times}}$ with $D\in\R^{p\times p}$ given by
\begin{align*}
 D_{ij}:=\begin{cases}
\textcolor{black}{-}1&\text{~~~if~}i<p\text{~and~}i=j\\
\textcolor{white}{-}1&\text{~~~if~}i<p\text{~and~}i=j-1\\
\textcolor{white}{-}0&\text{~~~otherwise}
\end{cases}\,.
\end{align*}
We find $\lo=2\,\normsup{\fusedmatrix^+{}^\top\noise}$, and~\eqref{special2} implies\footnote{Unlike assumed earlier, $\operatorname{Ker}(\fusedmatrix)\neq \{\mathbf 0_{p\times p}\}$ in this example, but one can replace matrix $\fusedmatrix$ by the invertible matrix $\fusedmatrix+\epsilon\operatorname{I_{p\times p}}$ and then take the limit $\epsilon\to 0.$}
\begin{equation*}
\frac{1}{n}\,\normtwo{\betatrue-\estimatororacle}^2 \le \frac 2 n\,\normsup{\fusedmatrix^+{}^\top\noise}\,   \normone{\fusedmatrix\betatrue}\,.
\end{equation*}
In the case $l=1,$ the estimator becomes 
\begin{equation*}
  \estimatort \in \argmin_{\beta\in\rp}\big\{\,\normtwo{\outcome-\beta}^2+\tuningparameter\sum_{j=2}^p|\beta_j-\beta_{j-1}|\big\}\,,
\end{equation*}
which corresponds to the total variation~\cite{Rudin92} and fused lasso penalizations~\cite{Tibshirani05}. Moreover, one can check that the Moore-Penrose inverse of $\fusedmatrix=D$ is then given by $D^+$ with entries
\begin{align*}
 D_{ij}^+:=\begin{cases}
~(j-p)/p&\text{~~~if~}i\leq j<p\\
~j/p&\text{~~~if~}i> j,~j<p\\
~0&\text{~~~if~}j=p
\end{cases}\,.
\end{align*}
We find  $\lo=2\,\normsup{D^+{}^\top\noise}$ and the corresponding bound
\begin{equation*}
\frac{1}{n}\,\normtwo{\betatrue-\estimatororacle}^2 \le \frac 2 n\,\normsup{D^+{}^\top\noise}\,   \normone{D\betatrue}\,.
\end{equation*}

\section{Discussion}\label{sec:discussion}
\Sbos\ have been derived for many high-dimensional estimators. In this paper, we complement these bounds with corresponding \pbos. Which type of bound is sharper depends on the underlying model. As a general rule, \pbos\ improve with increasing correlations in the design matrix, while \sbos\ deteriorate with increasing correlations and are eventually infinite once the design matrix is too far from an orthogonal matrix~\cite{ArnakMoYo14}.

Without making assumptions on the design, and for a wide range of penalized estimators, our results imply non-trivial rates of convergence for prediction.  This is of direct practical relevance, since  the assumptions inflicted by \sbos\ are often unrealistic in applications and, in any case, depend on inaccessible model parameters and thus cannot be verified in practice. For example, \sbos\ for the lasso have been derived under a variety of assumptions  on $\design$, including RIP, restricted eigenvalue condition, and compatibility condition, see~\cite{Sara09} for an overview of these concepts. Results from random matrix theory show that these assumptions are fulfilled with high probability if the data generating process is ``nice'' (sub-Gaussian, isotropic, ...) and the sample size~$n$ is large enough, see~\cite{Sara14} for a recent result. 
Unfortunately, in practice, the data generating processes are not necessarily nice, and sample sizes can be small --- not only in comparison with the number of parameters, but also in absolute terms. 
Moreover, even if the conditions of fast rates bounds are satisfied, these bounds can contain very large factors and are then only interesting from an asymptotic point of view.

\section*{Acknowledgements} We thank Jacob Bien and Mohamed Hebiri for their insightful comments and the Editors and Reviewers for their helpful suggestions.

\bibliography{References}
\pagebreak
\appendix
\section{Proofs}\label{sec:proofs}

We start with four auxiliary results, Lemmas~\ref{re:convexity}-\ref{re:nonzero}. 
We then prove Lemma~\ref{re:existence} and Theorem~\ref{r:finalbound}. Figure~\ref{fig:dependence} depicts the dependence structure of the results.\\

\tikzstyle{block} = [rectangle, draw=black, fill=white, node distance=1.1cm, minimum height=2em, font=\color{black}\sffamily, rounded corners=5pt, text width=5em, text badly centered]
\tikzstyle{cloud} = [ellipse, draw=black, fill=white, node distance=1.1cm, minimum height=2em, font=\color{black}\sffamily,text width=4em, text badly centered]
\tikzstyle{line} = [draw, -latex']

\begin{figure}[h]    
\begin{tikzpicture}[auto, node distance=1.5cm]
    \node [cloud] (A1) {Lemma~\ref{re:convexity}};
    \node [cloud, right = of A1] (A2) {Lemma~\ref{re:functions}};
    \node [cloud, right = of A2] (A3) {Lemma~\ref{re:nonzero}};
    \coordinate (Middle) at ($(A1)!0.5!(A2)$);
     \coordinate (Middlep) at ($(A2)!0.5!(A3)$);
     \node [cloud, below = of Middle] (Lemma) {Lemma~\ref{re:existence}};
    \node [block, below = of Middlep] (Thm) {Theorem~\ref{r:finalbound}};
    \path [line] (A1) -- (A2);
    \path [line] (A3) -- (Thm);
    \path [line] (A2)  -- (Thm);
    \path [line] (A1) -- (Lemma);
     \path [line] (A3) -- (Lemma);
    \path [line] (A2) -- (Lemma);
\end{tikzpicture}
\caption{Dependencies between the results. For example, the arrow between Lemma~\ref{re:convexity} and Lemma~\ref{re:existence} depicts that the proof of Lemma~\ref{re:existence} makes use of Lemma~\ref{re:convexity}.}
\label{fig:dependence}
\end{figure}
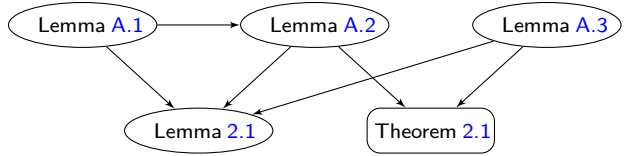
{\bf Hints for Page~\pageref{sqrtex}:}\label{sqrtexproof}
Note that (see \cite[Section~2.2]{vanderVaart96} for maximal inequalities that can be used for the last line)
\begin{align*}
\normtwo{\outcome-\design\estimatororacle}
= &\ \normtwo{\outcome-\design\truth+\design\truth-\design\estimatororacle}\\
\le&\ \normtwo{\outcome-\design\truth}+\normtwo{\design\truth-\design\estimatororacle}\\
\le& \ \normtwo{\noise}+\sqrt{\frac{2\,\normsup{\design^\top\noise}\,\normone{\truth}}{n}}\sqrt{n}\\
\lesssim& \ \sqrt{n}+\sqrt{\frac{\log(p)\ \normone{\truth}}{\sqrt n}}\sqrt{n}
\end{align*}
and similarly 
\begin{align*}
\normtwo{\outcome-\design\estimatororacle}
\gtrsim& \ \sqrt{n}-\sqrt{\frac{\log(p)\ \normone{\truth}}{\sqrt n}}\sqrt{n}\,.
\end{align*}
Thus, as long as $\normone{\truth}=o(\sqrt n/\log (p)),$ it holds that $\lo\approx \normsup{\design^\top\noise}/\,\normtwo{\noise}\approx\sqrt{\log(p)}.$ 

\subsection{Auxiliary Lemmas}

\begin{lemma}\label{re:convexity} 
For any
$\estimatort, \tildeestimator  \in \argmin_{\beta\in\R^p} \big\{g(\,\normtwo{\outcome-\design\beta}^{2})+\sum_{j=1}^{\numbergroups}\lambda_j\,\normqj{\fusedmatrix_j\beta}\big\}$ and $\alpha \in [0,1],$  it holds that $\design\estimatort=\design \tildeestimator$ and
\begin{equation*}
\alpha\estimatort+(1-\alpha)\tildeestimator \in \argmin_{\beta\in\R^p} \big\{g(\,\normtwo{\outcome-\design\beta}^{2})+\sum_{j=1}^{\numbergroups}\lambda_j\,\normqj{\fusedmatrix_j\beta}\big\}\,.
\end{equation*}
\end{lemma}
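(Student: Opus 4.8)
The plan is to read off everything from convexity of the objective
\[
F(\beta):=g\big(\normtwo{\outcome-\design\beta}^2\big)+\sum_{j=1}^{\numbergroups}\lambda_j\,\normqj{\fusedmatrix_j\beta}\,,
\]
together with the \emph{strict} convexity of its data-fit part on the image of $\design$. Indeed, $\beta\mapsto\design\beta$ is linear, $\alpha\mapsto g(\normtwo{\alpha}^2)$ is strictly convex by assumption, and each $\beta\mapsto\normqj{\fusedmatrix_j\beta}$ is a seminorm, hence convex; since the $\lambda_j$ are positive, $F$ is convex. This alone makes $\argmin_\beta F(\beta)$ a convex set, which is half of the second assertion, but to also pin down $\design\estimatort=\design\tildeestimator$ one has to use the strictness. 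Note that existence of minimizers need not be shown, since the statement is made for given $\estimatort,\tildeestimator\in\argmin_\beta F(\beta)$.

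First I would prove $\design\estimatort=\design\tildeestimator$ by contradiction. Assume $\design\estimatort\neq\design\tildeestimator$. Then $\outcome-\design\estimatort$ and $\outcome-\design\tildeestimator$ are two distinct vectors in $\rn$, so strict convexity of $\alpha\mapsto g(\normtwo{\alpha}^2)$ gives
\[
g\Big(\normtwo{\outcome-\design\tfrac{\estimatort+\tildeestimator}{2}}^2\Big)<\tfrac12\,g\big(\normtwo{\outcome-\design\estimatort}^2\big)+\tfrac12\,g\big(\normtwo{\outcome-\design\tildeestimator}^2\big)\,.
\]
Adding to this the (non-strict) convexity inequality for the penalty term evaluated at $\tfrac12(\estimatort+\tildeestimator)$ yields $F\big(\tfrac12(\estimatort+\tildeestimator)\big)<\tfrac12F(\estimatort)+\tfrac12F(\tildeestimator)=\min_\beta F(\beta)$, a contradiction; hence $\design\estimatort=\design\tildeestimator$.

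Given this, the second claim follows quickly. Let $p^*:=\sum_{j=1}^{\numbergroups}\lambda_j\normqj{\fusedmatrix_j\estimatort}$; this is the common penalty value of the two minimizers, because $\design\estimatort=\design\tildeestimator$ makes their data-fit terms equal while $F(\estimatort)=F(\tildeestimator)$. For $\alpha\in[0,1]$, we have $\design(\alpha\estimatort+(1-\alpha)\tildeestimator)=\design\estimatort$, so the data-fit term at $\alpha\estimatort+(1-\alpha)\tildeestimator$ equals $g(\normtwo{\outcome-\design\estimatort}^2)$, while convexity of the penalty bounds its penalty term by $\alpha p^*+(1-\alpha)p^*=p^*$. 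Therefore $F(\alpha\estimatort+(1-\alpha)\tildeestimator)\le g(\normtwo{\outcome-\design\estimatort}^2)+p^*=\min_\beta F(\beta)$, which forces equality, so $\alpha\estimatort+(1-\alpha)\tildeestimator\in\argmin_\beta F(\beta)$. The argument is short; the only point demanding any care is invoking the strict-convexity hypothesis precisely at the two distinct image vectors, and there is no genuine obstacle beyond that bookkeeping.
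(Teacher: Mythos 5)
Your proof is correct and uses the same ingredients as the paper's: strict convexity of $\alpha\mapsto g(\normtwo{\alpha}^2)$ applied to the two residual vectors, convexity of the penalty, and equality of the two minimum objective values. The only difference is organizational — you first rule out $\design\estimatort\neq\design\tildeestimator$ by contradiction at the midpoint and then verify the convex combination directly, whereas the paper runs a single chained inequality that yields both conclusions at once — which is an immaterial distinction.
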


\begin{lemma}\label{re:functions}
Let $\R^\numbergroups$ be  equipped with the Euclidean norm, and $\R$ be equipped with the  absolute value norm. Then, the function
\begin{align*}
(0,\infty)^\numbergroups &\to \R\\
  \lambda&\mapsto g(\,\normtwo{\outcome-\design\estimatort}^{2})
\end{align*}
and the function
\begin{align*}
(0,\infty)^\numbergroups &\to \R\\
  \lambda&\mapsto \normtwo{\outcome-\design\estimatort}^{2}
\end{align*}
are both continuous.
\end{lemma}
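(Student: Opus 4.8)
The plan is to reduce both claims to the continuity of the fitted-value map $\tuningparameter\mapsto\design\estimatort$, which is well defined by Lemma~\ref{re:convexity} even though the minimizer $\estimatort$ itself need not be unique. Since $\linkfunction$ and $\alpha\mapsto\normtwo{\outcome-\alpha}^2$ are continuous on $\R^n$, once $\tuningparameter\mapsto\design\estimatort$ is shown continuous, composition yields continuity of $\tuningparameter\mapsto\linkfunction(\normtwo{\outcome-\design\estimatort}^2)$ and of $\tuningparameter\mapsto\normtwo{\outcome-\design\estimatort}^2$ simultaneously. So it suffices to prove: if $\tuningparameter^{(m)}\to\tuningparameter^{(0)}$ in $(0,\infty)^\numbergroups$, then $\design\estimator^{\tuningparameter^{(m)}}\to\design\estimator^{\tuningparameter^{(0)}}$.

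First I would establish uniform boundedness. Writing $F_\tuningparameter(\beta):=\linkfunction(\normtwo{\outcome-\design\beta}^2)+\sum_{j=1}^{\numbergroups}\tuningparameter_j\normqj{\fusedmatrix_j\beta}$ for the objective, plugging $\beta=\mathbf 0_p$ into the optimality inequality and using $\linkfunction\ge 0$ gives $\linkfunction(\normtwo{\outcome-\design\estimator^{\tuningparameter^{(m)}}}^2)\le\linkfunction(\normtwo{\outcome}^2)$, so strict monotonicity of $\linkfunction$ forces $\normtwo{\outcome-\design\estimator^{\tuningparameter^{(m)}}}\le\normtwo{\outcome}$ and hence $\normtwo{\design\estimator^{\tuningparameter^{(m)}}}\le 2\normtwo{\outcome}$; the fitted values are therefore bounded. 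Moreover the optimal penalty value $\sum_{j}\tuningparameter_j^{(m)}\normqj{\fusedmatrix_j\estimator^{\tuningparameter^{(m)}}}$ is well defined (all minimizers share the same objective value and, by Lemma~\ref{re:convexity}, the same $\linkfunction$-term) and is bounded by $\linkfunction(\normtwo{\outcome}^2)$. Because $\tuningparameter^{(m)}\to\tuningparameter^{(0)}\in(0,\infty)^\numbergroups$, there is $c>0$ with $\tuningparameter_j^{(m)}\ge c$ for all $j$ and all large $m$, whence $\sum_{j}\normqj{\fusedmatrix_j\estimator^{\tuningparameter^{(m)}}}$ is bounded; since $\bigcap_j\operatorname{Ker}(\fusedmatrix_j)=\{\mathbf 0\}$, the map $\beta\mapsto\sum_j\normqj{\fusedmatrix_j\beta}$ is a norm on $\R^p$, so for any choice of minimizers $\estimator^{\tuningparameter^{(m)}}$ the sequence is bounded in $\R^p$.

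Next I would run a subsequence argument. Given any subsequence, boundedness lets me extract a further subsequence along which $\estimator^{\tuningparameter^{(m_l)}}\to\beta_\infty$ for some $\beta_\infty\in\R^p$. For every fixed $\beta\in\R^p$, optimality gives $F_{\tuningparameter^{(m_l)}}(\estimator^{\tuningparameter^{(m_l)}})\le F_{\tuningparameter^{(m_l)}}(\beta)$; letting $l\to\infty$ and using continuity of $\linkfunction$, of the norms $\normqj{\cdot}$, of the linear maps $\fusedmatrix_j$ and $\design$, together with $\tuningparameter^{(m_l)}\to\tuningparameter^{(0)}$, both sides converge and yield $F_{\tuningparameter^{(0)}}(\beta_\infty)\le F_{\tuningparameter^{(0)}}(\beta)$. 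Hence $\beta_\infty$ minimizes $F_{\tuningparameter^{(0)}}$, so Lemma~\ref{re:convexity} gives $\design\beta_\infty=\design\estimator^{\tuningparameter^{(0)}}$. Since every subsequence of $\{\design\estimator^{\tuningparameter^{(m)}}\}$ has a further subsequence converging to the single point $\design\estimator^{\tuningparameter^{(0)}}$, the whole sequence converges there, which is the desired continuity. (An alternative route is Berge's maximum theorem applied to the reparametrized problem, but the hands-on subsequence argument stays within the paper's ``convexity and continuity'' toolkit.)

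The main obstacle is the non-uniqueness of $\estimatort$: one must phrase everything in terms of the uniquely defined fitted value $\design\estimatort$ and the uniquely defined optimal penalty value, and then use the kernel condition $\bigcap_j\operatorname{Ker}(\fusedmatrix_j)=\{\mathbf 0\}$ to upgrade the penalty bound to genuine boundedness of arbitrarily chosen minimizers in $\R^p$ — without it the limiting vector $\beta_\infty$ need not exist. Everything else is routine: the $\beta=\mathbf 0_p$ comparison for the fitted values, the joint continuity of $F_\tuningparameter(\beta)$ in $(\tuningparameter,\beta)$ along the sequence, and the standard ``every subsequence has a sub-subsequence with the same limit'' principle.
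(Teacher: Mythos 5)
Your proof is correct, but it takes a genuinely different route from the paper's. The paper argues in two stages: it first proves that the optimal-value map $\tuningparameter\mapsto \linkfunction(\,\normtwo{\outcome-\design\estimatort}^2)+\sum_{j=1}^\numbergroups\tuningparameter_j\,\normqj{\fusedmatrix_j\estimatort}$ is continuous, using the two optimality comparisons between $\tuningparameter$ and $\tuningparameter'$, H\"older's inequality, and the bound $\normqj{\fusedmatrix_j\estimatort}\le \linkfunction(\,\normtwo{\outcome}^2)/\tuningparameter_j$; it then obtains continuity of $\tuningparameter\mapsto\linkfunction(\,\normtwo{\outcome-\design\estimatort}^2)$ by contradiction, restricting the minimization to a set $B$ of vectors whose fitted-value norm differs from that of $\estimatortprime$ and showing that a putative jump in the $\linkfunction$-term would force a jump in the optimal value; the second claim then follows from continuity and strict monotonicity of $\linkfunction$. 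You instead prove the stronger intermediate statement that $\tuningparameter\mapsto\design\estimatort$ is continuous, via a compactness/subsequence (Berge-type) argument: the $\beta=\mathbf 0_p$ comparison bounds the optimal penalty value, the condition $\bigcap_j\operatorname{Ker}(\fusedmatrix_j)=\{\mathbf 0_p\}$ upgrades this to boundedness of any sequence of minimizers, limit points minimize the limiting objective by joint continuity of the objective, Lemma~\ref{re:convexity} pins down their common fitted value, and the sub-subsequence principle plus composition with $\linkfunction$ and the squared norm yields both claims at once. Your route buys a cleaner argument and the extra conclusion about the fitted-value path, and it in effect supplies the compactness reasoning that the paper's contradiction step leaves implicit when it asserts the existence of the gap $\xi\equiv\xi(\tuningparameter',\epsilon_0')>0$; the paper's route, in exchange, delivers continuity of the optimal value function as an explicit by-product and works entirely at the level of objective values, never requiring convergence of minimizers themselves. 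Both proofs rely on Lemma~\ref{re:convexity} to handle non-uniqueness of $\estimatort$, and both use only the convexity and continuity toolkit of the paper.
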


\begin{lemma}\label{re:nonzero}
 With probability one, it holds that $\,\normtwo{\outcome-\design\estimatort}^2>0$ and $g'(\,\normtwo{\outcome-\design\estimatort}^2)> 0$ for any tuning parameter $\lambda\in(0,\infty)^\numbergroups$.
 \end{lemma}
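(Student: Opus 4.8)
The plan is to reduce everything to the first assertion, $\normtwo{\outcome-\design\estimatort}^{2}>0$ with probability one for every $\tuningparameter\in(0,\infty)^\numbergroups$; the second assertion then follows immediately, since $\linkfunction$ is continuously differentiable on $(0,\infty)$ with strictly positive derivative there. Write $\operatorname{pen}(\beta):=\sum_{j=1}^{\numbergroups}\tuningparameter_j\,\normqj{\fusedmatrixj\beta}$ for the (fixed) penalty. The core is a deterministic implication: if $\design\estimatort=\outcome$ for some $\tuningparameter\in(0,\infty)^\numbergroups$ and $\lim_{x\downarrow0}\linkfunction(x)/\sqrt x=0$, then $\outcome=\mathbf 0_n$. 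Granting this, the standing assumption $\outcome\neq\mathbf 0_n$ forces $\design\estimatort\neq\outcome$ with probability one simultaneously for all $\tuningparameter$, since the exceptional event $\{\outcome=\mathbf 0_n\}$ does not depend on $\tuningparameter$.

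To establish the implication, suppose $\design\estimatort=\outcome$; if $\outcome=\mathbf 0_n$ there is nothing to prove, so assume $\outcome\neq\mathbf 0_n$. Then $\estimatort\neq\mathbf 0_p$ (because $\design\mathbf 0_p=\mathbf 0_n\neq\outcome$), and hence $\operatorname{pen}(\estimatort)>0$, since $\bigcap_{j=1}^{\numbergroups}\operatorname{Ker}(\fusedmatrixj)=\{\mathbf 0\}$ and all $\tuningparameter_j>0$. For $s\in[0,1]$ the vector $(1-s)\estimatort$ is feasible and satisfies $\outcome-\design\big((1-s)\estimatort\big)=s\,\outcome$, so by positive homogeneity of $\operatorname{pen}$ and $\linkfunction(0)=0$ it has objective value $\linkfunction\big(s^{2}\normtwo{\outcome}^{2}\big)+(1-s)\,\operatorname{pen}(\estimatort)$, whereas $\estimatort$ has objective value $\operatorname{pen}(\estimatort)$. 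Optimality of $\estimatort$ therefore gives $\linkfunction\big(s^{2}\normtwo{\outcome}^{2}\big)\ge s\,\operatorname{pen}(\estimatort)$ for all $s\in[0,1]$; dividing by $s>0$ and letting $s\downarrow0$ (using $\normtwo{\outcome}>0$) yields $\operatorname{pen}(\estimatort)\le\normtwo{\outcome}\cdot\lim_{x\downarrow0}\linkfunction(x)/\sqrt x=0$, contradicting $\operatorname{pen}(\estimatort)>0$. So $\estimatort=\mathbf 0_p$ and $\outcome=\mathbf 0_n$, as claimed. The hypothesis $\lim_{x\downarrow0}\linkfunction(x)/\sqrt x=0$ holds whenever $\linkfunction'(0^{+}):=\lim_{x\downarrow0}\linkfunction'(x)<\infty$ --- because then $\linkfunction(x)\le\linkfunction'(0^{+})\,x$ by monotonicity of $\linkfunction'$ --- and in particular for $\linkfunction(x)=x$, hence for the lasso, elastic net, slope, (group) lasso, and trend-filtering examples.

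It remains to handle link functions with $\gamma:=\lim_{x\downarrow0}\linkfunction(x)/\sqrt x>0$, such as $\linkfunction(x)=\sqrt x$ (the square-root/scaled lasso), for which the scaling argument only delivers the weaker bound $\operatorname{pen}(\estimatort)\le\gamma\,\normtwo{\outcome}$. Here I would invoke the noise assumption through the subdifferential optimality condition at the interpolating point: since $\outcome-\design\estimatort=\mathbf 0_n$, the subdifferential of $\beta\mapsto\linkfunction(\normtwo{\outcome-\design\beta}^{2})$ at $\estimatort$ is contained in $\{-\design^\top u:\normtwo{u}\le\gamma\}$, so there exist $u\in\rn$ with $\normtwo{u}\le\gamma$ and subgradients $z_j\in\partial\normqj{\fusedmatrixj\estimatort}$ with $\design^\top u=\sum_{j=1}^{\numbergroups}\tuningparameter_j\,\fusedmatrixj^\top z_j$. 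Combining this with $\noise=\design(\estimatort-\betatrue)\in\operatorname{col}(\design)$ and the partition identity $\sum_{j}\projectionmatrixj\fusedmatrixj^{+}\fusedmatrixj=\operatorname{I}$, I would use the standing hypothesis $\normqjd{(\design\projectionmatrixj\fusedmatrixj^{+})^\top\noise}>0$ to build, out of $\projectionmatrixj\fusedmatrixj^{+}$ and the noise component it isolates, a direction $v\in\rp$ along which the objective strictly decreases at $\estimatort$, contradicting optimality. I expect this last step --- converting ``$(\design\projectionmatrixj\fusedmatrixj^{+})^\top\noise\neq\mathbf 0_p$'' into a genuine descent direction at the interpolating solution, uniformly over $\tuningparameter$ --- to be the main obstacle; everything else is routine convexity and homogeneity bookkeeping, together with the almost-sure facts $\outcome\neq\mathbf 0_n$ and $(\design\projectionmatrixj\fusedmatrixj^{+})^\top\noise\neq\mathbf 0_p$.
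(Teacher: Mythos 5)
Your reduction of the second claim to the first is fine, and your scaling argument along $(1-s)\estimatort$ is a correct and complete proof of $\normtwo{\outcome-\design\estimatort}^2>0$ whenever $\lim_{x\downarrow0}\linkfunction(x)/\sqrt x=0$ (in particular for $\linkfunction(x)=x$, hence the lasso-type examples). The genuine gap is the other case you yourself flag: for link functions with $\lim_{x\downarrow0}\linkfunction(x)/\sqrt x>0$, above all $\linkfunction(x)=\sqrt x$, which is one of the paper's two canonical link functions and underlies the square-root/scaled lasso and group square-root lasso examples, your homogeneity bound only yields $\operatorname{pen}(\estimatort)\le\gamma\,\normtwo{\outcome}$, which is not a contradiction, and the step you propose to close it --- turning $\normqjd{(\design \projectionmatrixj\fusedmatrixj^{+})^\top\noise}>0$ into a descent direction at an interpolating solution, uniformly over $\tuningparameter$ --- is left as an admitted ``main obstacle'' rather than carried out. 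Since the lemma is stated, and later used in the proofs of Lemma~\ref{re:existence} and Theorem~\ref{r:finalbound}, for every admissible link function and every $\tuningparameter\in(0,\infty)^\numbergroups$, the proposal as written does not establish the statement in the required generality.

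For comparison, the paper's proof treats all link functions at once and never touches the noise condition: assuming $\outcome-\design\estimatort=\mathbf 0_n$, it writes the KKT condition for the convex objective and applies the chain rule, so the loss contributes the factor $-2\design^\top(\outcome-\design\estimatort)$, which vanishes at an interpolating point; what remains is $\mathbf 0_p\in\sum_{j=1}^{\numbergroups}\tuningparameter_j\,\partial_\beta\normqj{\fusedmatrixj\beta}\big|_{\beta=\estimatort}$, i.e.\ $\estimatort$ minimizes $\beta\mapsto\sum_j\tuningparameter_j\normqj{\fusedmatrixj\beta}$, which by the assumption $\bigcap_j\operatorname{Ker}(\fusedmatrixj)=\{\mathbf 0\}$ is a norm and hence minimized only at $\mathbf 0_p$; then $\outcome=\design\estimatort=\mathbf 0_n$, contradicting $\outcome\neq\mathbf 0_n$, which is the sole probabilistic ingredient. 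Your unease about the square-root case does point at the delicate spot (at zero residual the subdifferential of $\beta\mapsto\linkfunction(\normtwo{\outcome-\design\beta}^2)$ need not reduce to $\{\mathbf 0_p\}$ when $\linkfunction$ has infinite slope at $0$, which is exactly what the paper's chain-rule step formalizes away), but the paper's argument shows that no noise-based construction is invoked there; so the extra machinery you sketch is both different from the paper's route and, as submitted, unfinished.
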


\subsection{Proofs of the Auxilliary Lemmas}

\begin{proof}[Proof of Lemma~\ref{re:convexity}] 
The case $\alpha\in\{0,1\}$ is straightforward, so that we consider a given $\alpha\in(0,1).$ We first show that $\design\estimatort=\design \tildeestimator$. 
Since the function $\alpha\mapsto g(\,\normtwo{\alpha}^2)$ is strictly convex by assumption, it follows for any vectors $a,\tilde a\in\R^n$ that
\begin{align*}
g(\,\normtwo{\alpha a+(1-\alpha)\tilde a}^{2})\le \alpha g(\,\normtwo{a}^{2})+(1-\alpha)g(\,\normtwo{\tilde a}^{2})
\end{align*}
with strict inequality if $a\neq \tilde a$.  Using this with $a=\outcome-\design \estimatort$ and $\tilde a= \outcome-\design\tildeestimator,$ and invoking the convexity of the norms~$\normqj{\cdot},$ we find
\begin{align*}
&g(\,\normtwo{\outcome-\design\big(\alpha \estimatort+(1-\alpha)\tildeestimator\big)}^{2})+\sum_{j=1}^{\numbergroups}\lambda_j\,\normqj{\fusedmatrix_j\big(\alpha \estimatort+(1-\alpha)\tildeestimator\big)}\\
=&g(\,\normtwo{\alpha(\outcome-\design\estimatort)+(1-\alpha)(\outcome-\design\tildeestimator)}^{2})+\sum_{j=1}^{\numbergroups}\lambda_j\,\normqj{\alpha \fusedmatrix_j\estimatort+(1-\alpha)\fusedmatrix_j\tildeestimator}\\
\leq&\alpha g(\,\normtwo{\outcome-\design\estimatort}^{2})+(1-\alpha)g(\,\normtwo{\outcome-\design\tildeestimator}^{2})+\alpha\sum_{j=1}^{\numbergroups} \lambda_j \,\normqj{\fusedmatrix_j\estimatort}+(1-\alpha)\sum_{j=1}^{\numbergroups}\lambda_j \, \normqj{\fusedmatrix_j\tildeestimator}\\
=&\alpha \Big(g(\,\normtwo{\outcome-\design\estimatort}^{2})+\sum_{j=1}^{\numbergroups} \lambda_j \,\normqj{\fusedmatrix_j\estimatort}\Big)+(1-\alpha)\Big(g(\,\normtwo{\outcome-\design\tildeestimator}^{2})+\sum_{j=1}^{\numbergroups}\lambda_j\,\normqj{\fusedmatrix_j\tildeestimator}\Big)
\end{align*}
with strict inequality if $\design\estimatort\neq \design \tildeestimator.$ 
Moreover, we note that 
\begin{equation*}
\estimatort, \tildeestimator  \in \argmin_{\beta\in\R^p} \big\{g(\,\normtwo{\outcome-\design\beta}^{2})+\sum_{j=1}^{\numbergroups}\lambda_j\,\normqj{\fusedmatrix_j\beta}\big\}  
\end{equation*}
implies 
\begin{align*}
g(\,\normtwo{\outcome-\design\tildeestimator}^{2})+\sum_{j=1}^{\numbergroups}\lambda_j\,\normqj{\fusedmatrix_j\tildeestimator}=g(\,\normtwo{\outcome-\design\estimatort}^{2})+\sum_{j=1}^{\numbergroups}\lambda_j\,\normqj{\fusedmatrix_j\estimatort}\,.
\end{align*}
Combining the results yields
\begin{align*}
&g(\,\normtwo{\outcome-\design\big(\alpha \estimatort+(1-\alpha)\tildeestimator\big)}^{2})+\sum_{j=1}^{\numbergroups}\lambda_j\,\normqj{\fusedmatrix_j\big(\alpha \estimatort+(1-\alpha)\tildeestimator\big)}\\
\leq&g(\,\normtwo{\outcome-\design\estimatort}^{2})+\sum_{j=1}^{\numbergroups} \lambda_j \,\normqj{\fusedmatrix_j\estimatort}
\end{align*}
with strict inequality if $\design\estimatort\neq \design \tildeestimator.$ Using again that 
\begin{equation*}
  \estimatort \in \argmin_{\beta\in\R^p} \left\{g(\,\normtwo{\outcome-\design\beta}^{2})+\sum_{j=1}^{\numbergroups}\lambda_j\,\normqj{\fusedmatrix_j\beta}\right\}\,,
\end{equation*}
we find that the above inequality is actually an equality, so that 
\begin{equation*}
\design\estimatort=\design\tildeestimator\,.
\end{equation*}
and
\begin{equation*}
\alpha\estimatort+(1-\alpha)\tildeestimator\in \argmin_{\beta\in\R^p} \bigg\{g(\,\normtwo{\outcome-\design\beta}^{2})+\sum_{j=1}^{\numbergroups}\lambda_j\,\normqj{\fusedmatrix_j\beta}\bigg\}\,
\end{equation*}
as desired.

\end{proof}

\begin{proof}[Proof of Lemma~\ref{re:functions}]                  

To show that the function $\lambda\mapsto g(\,\normtwo{\outcome-\design\estimatort}^{2})$ is continuous, we first show that the function 
\begin{align*}
\R^\numbergroups &\to \R \\
\lambda &\mapsto g(\,\normtwo{\outcome-\design\estimatort}^{2})+\sum_{j=1}^\numbergroups\lambda_j\,\normqj{\fusedmatrix_j\estimatort}
\end{align*} 
is continuous.\\
For this, we show the continuity at any  fixed  $\lambda\in(0,\infty)^\numbergroups$. Given an $\epsilon>0,$ define  $m:=2\max_{1\le j\le \numbergroups}\left\{\frac{g(\,\normtwo{\outcome}^2)}{\lambda_j}\right\}$ and $\delta:= \min_{1\le j\le \numbergroups}\frac{\lambda_j}{2}\wedge \frac{\epsilon}{\sqrt{k}m},$ where $\wedge$ denotes the minimum. Consider now an arbitrary $\lambda'\in(0,\infty)^\numbergroups$ with $\normtwo{\lambda-\lambda'} < \delta.$ 

As a next step,  note that Criterion~\eqref{eq:estimator} implies
\begin{equation*}
  g(\,\normtwo{\outcome-\design\estimatort}^2)+\sum_{j=1}^{\numbergroups}\lambda_j\,\normqj{\fusedmatrix_j\estimatort} \leq   g(\,\normtwo{\outcome}^2)\,.
\end{equation*}
In particular, since the function $g$ is non-negative on $[0,\infty)$ by assumption, it holds that
\begin{equation*}
 \sum_{j=1}^{\numbergroups}\lambda_j\,\normqj{\fusedmatrix_j\estimatort} \leq   g(\,\normtwo{\outcome}^2)\,.
\end{equation*}
Thus, we have 
\begin{equation*}
 \lambda_j\,\normqj{\fusedmatrix_j\estimatort} \leq   g(\,\normtwo{\outcome}^2) \text{~~for $j\in\{1,2,\dots,\numbergroups\}$} \,.
\end{equation*}
Hence, if $\lambda_j>0,$ it holds that
\begin{equation}
\label{eq:continuous1}
\normqj{\fusedmatrix_j\estimatort} \le \frac{g(\,\normtwo{\outcome}^2)}{\lambda_j }\text{~~for $j\in\{1,2,\dots,\numbergroups\}$} \,.
\end{equation}
Also note that by Criterion~\eqref{eq:estimator}, it holds that
\begin{align*}
g(\,\normtwo{\outcome-\design\estimatort}^2)+\sum_{j=1}^{\numbergroups}\lambda_j\,\normqj{\fusedmatrix_j\estimatort}&\le g(\,\normtwo{\outcome-\design\estimatortprime}^2)+\sum_{j=1}^{\numbergroups}\lambda_j\,\normqj{\fusedmatrix_j\estimatortprime}
\end{align*}
and
\begin{align*}
g(\,\normtwo{\outcome-\design\estimatortprime}^2)+\sum_{j=1}^{\numbergroups}\lambda'_j\,\normqj{\fusedmatrix_j\estimatortprime}&\le g(\,\normtwo{\outcome-\design\estimatort}^2)+\sum_{j=1}^{\numbergroups}\lambda'_j\,\normqj{\fusedmatrix_j\estimatort}\,.
\end{align*}
Rearranging these two inequalities, we obtain
\begin{align*}
g(\,\normtwo{\outcome-\design\estimatort}^2)+\sum_{j=1}^{\numbergroups}\lambda_j\,\normqj{\fusedmatrix_j\estimatort}-g(\,\normtwo{\outcome-\design\estimatortprime}^2)-\sum_{j=1}^{\numbergroups}\,\lambda'_j\,\normqj{\fusedmatrix_j\estimatortprime} &\le \sum_{j=1}^{\numbergroups}(\lambda_j-\lambda'_j)\,\normqj{\fusedmatrix_j\estimatortprime}\
\end{align*}
and
\begin{align*}
g(\,\normtwo{\outcome-\design\estimatort}^2)+\sum_{j=1}^{\numbergroups}\lambda_j\,\normqj{\fusedmatrix_j\estimatort}-g(\,\normtwo{\outcome-\design\estimatortprime}^2)-\sum_{j=1}^{\numbergroups}\,\lambda'_j\,\normqj{\fusedmatrix_j\estimatortprime} &\ge \sum_{j=1}^{\numbergroups}(\lambda_j-\lambda'_j)\,\normqj{\fusedmatrix_j\estimatort}\,.
\end{align*}
By H\"older's inequality and Inequality~\eqref{eq:continuous1}, it holds that
\begin{align*}
\sum_{j=1}^{\numbergroups}(\lambda_j-\lambda'_j)\,\normqj{\fusedmatrix_j\estimatort}&\ge - \,\normone{\lambda-\lambda'}\max_{1\le j\le \numbergroups}\big\{\,\normqj{\fusedmatrix_j\estimatort}\big\} \\
&\ge -\,\normone{\lambda-\lambda'}\max_{1\le j\le \numbergroups}\left\{\frac{g(\,\normtwo{\outcome}^2)}{\lambda_j}\right\}
\end{align*}
and similarly
\begin{align*}
\sum_{j=1}^{\numbergroups}(\lambda_j-\lambda'_j)\,\normqj{\fusedmatrix_j\estimatortprime}&\le \normone{\lambda-\lambda'}\max_{1\le j\le \numbergroups}\left\{\frac{g(\,\normtwo{\outcome}^2)}{\lambda'_j}\right\}.
\end{align*}
Now trivially, $\max_{1\le j\le \numbergroups}\left\{\frac{g(\,\normtwo{\outcome}^2)}{\lambda_j}\right\}\leq m$. Moreover,
\begin{align*}
\max_{1\le j\le \numbergroups}\left\{\frac{g(\,\normtwo{\outcome}^2)}{\lambda'_j}\right\}\leq \max_{1\le j\le \numbergroups}\left\{\frac{g(\,\normtwo{\outcome}^2)}{\lambda_j-\normtwo{\lambda-\lambda'}}\right\}\leq m
\end{align*}
by definition of $m$ and $\delta.$ Using this and again the definition of $\delta$, we then find
\begin{align*}
\Big | g(\,\normtwo{\outcome-\design\estimatort}^2)+\sum_{j=1}^{\numbergroups}\lambda_j\,\normqj{\fusedmatrix_j\estimatort}-g(\,\normtwo{\outcome-\design\estimatortprime}^2)-\sum_{j=1}^{\numbergroups}\lambda'_j\,\normqj{\fusedmatrix_j\estimatortprime} \Big|<\epsilon\,.
\end{align*}
This implies the desired continuity.

Now we show that function  $\lambda \mapsto g(\,\normtwo{\outcome-\design\estimatort}^2)$ is continuous.  We proceed with contradiction. Thus, we assume there exist $\lambda' \in (0,\infty)^\numbergroups$ and $\epsilon_0 >0,$ so that for any $\delta>0,$ there exists $\lambda \in (0,\infty)^\numbergroups$ satisfying
\begin{align*}
\normtwo{\lambda-\lambda'}<\delta \quad \text{and}\quad  \Big|\,g(\, \normtwo{\outcome-\design\estimatort}^2)-g(\, \normtwo{\outcome-\design\estimatortprime}^2)\Big|\ge \epsilon_0\,.
\end{align*}
We note that by Lemma~\ref{re:convexity}, the value of $g(\,\normtwo{\outcome-\design\estimatort}^2)$ does not depend on the specific  choice of the estimator~$\estimatort.$ 
Since $x\mapsto g(x)$ is strictly increasing, there exists $\epsilon'_0\equiv \epsilon'_0(\epsilon_0)>0$ such that 
\begin{equation}
\label{eq:continuous2}
\Big|\,\normtwo{\design\estimatort}^2-\normtwo{\design\estimatortprime}^2\Big|>\epsilon'_0\, .
\end{equation} 
Define the set $B:=\left\{\beta\in\R^p: \big|\,\normtwo{\design\estimatortprime}^2-\normtwo{\design\beta}^2\big|>\epsilon'_0\right\}$. It follows directly that $\estimatortprime\notin B$, and due to Inequality~\eqref{eq:continuous2} above, it follows that $\estimatort\in B.$ Let $\eta>0$ be arbitrary. Invoking Criterion~\eqref{eq:estimator} and the continuity of the objective function (note that taking minima does not affect the continuity), we obtain
\begin{align*}
&g(\,\normtwo{\outcome-\design\estimatort}^2)+\sum_{j=1}^\numbergroups\lambda_j\,\normqj{\fusedmatrix_j\estimatort}\\
=&\min_{\beta\in B}\big\{g(\,\normtwo{\outcome-\design\beta}^2)+\sum_{j=1}^\numbergroups
\lambda_j\,\normqj{\fusedmatrix_j\beta}\big\}\\
\geq &\min_{\beta\in B}\big\{g(\,\normtwo{\outcome-\design\beta}^2)+\sum_{j=1}^\numbergroups
\lambda'_j\,\normqj{\fusedmatrix_j\beta}\big\}-\eta
\end{align*}
if $\delta$ is sufficiently small. Moreover, using $\estimatortprime\notin B$ and again the continuity, it holds for $\delta$ sufficiently small that
\begin{align*}
& \min_{\beta\in B}\big\{g(\,\normtwo{\outcome-\design\beta}^2)+\sum_{j=1}^\numbergroups
\lambda'_j\,\normqj{\fusedmatrix_j\beta}\big\}\\
> & \min_{\beta\in \R^p}\big\{g(\,\normtwo{\outcome-\design\beta}^2)+\sum_{j=1}^\numbergroups
\lambda'_j\,\normqj{\fusedmatrix_j\beta}\big\}+\xi\\
\geq & \min_{\beta\in \R^p}\big\{g(\,\normtwo{\outcome-\design\beta}^2)+\sum_{j=1}^\numbergroups
\lambda_j\,\normqj{\fusedmatrix_j\beta}\big\}+\xi/2
\end{align*}
for a $\xi \equiv \xi (\lambda',\epsilon'_0)>0$. Choosing $\eta=\xi/4,$ we find
\begin{align*}
& \min_{\beta\in \R^p}\big\{g(\,\normtwo{\outcome-\design\beta}^2)+\sum_{j=1}^\numbergroups
\lambda_j\,\normqj{\fusedmatrix_j\beta}\big\}\\
> & \min_{\beta\in \R^p}\big\{g(\,\normtwo{\outcome-\design\beta}^2)+\sum_{j=1}^\numbergroups
\lambda_j\,\normqj{\fusedmatrix_j\beta}\big\}+\xi/4,
\end{align*}
which is a contradiction and thus concludes the proof of the continuity of the function $\tuningparameter\mapsto\linkfunction(\,\normtwo{\outcome-\design\estimatort}^2)$. The continuity of the function $\tuningparameter\mapsto\,\normtwo{\outcome-\design\estimatort}^2$ then follows from the assumption that the link function~\linkfunction\ is continuous and increasing. This concludes the proof of the lemma.
\end{proof}

\begin{proof}[Proof of Lemma~\ref{re:nonzero}]
Since $\outcome\neq \bold{0}_{n}$ with probability one, we assume $\outcome\neq \bold{0}_{n}$ in the following. 

We then show that 
\begin{equation*}
\normtwo{\outcome-\design\estimatort}>0\,.
\end{equation*}
We do this by contradiction, that is, we assume
\begin{equation*}
\normtwo{\outcome-\design\estimatort}=0\,.
\end{equation*}
This implies
\begin{equation}
\label{eq:assumption}
\outcome-\design\estimatort=\bold{0}_n\,.
\end{equation}
Since  $\beta\mapsto g(\,\normtwo{\outcome-\design\beta}^2)$ is convex, the subdifferential $\partial _\beta g(\,\normtwo{\outcome-\design\beta}^2)\Big|_{\beta=\estimatort}$ exists. Thus, the KKT conditions imply 
\begin{equation*}
\bold{0}_p\in\ \partial _\beta g(\,\normtwo{\outcome-\design\beta}^2)\Big|_{\beta=\estimatort}+\sum_{j=1}^\numbergroups \lambda_j\partial_\beta\,\normqj{\fusedmatrix_j\beta}\Big|_{\beta=\estimatort}\,,
\end{equation*}
which implies by the chain rule
\begin{equation*}
\bold{0}_p\in\partial_x g(x)\big|_{x=\,\normtwo{\outcome-\design \estimatort}^2}\big(-2\design^\top (\outcome-\design\estimatort)\big)+\sum_{j=1}^\numbergroups \lambda_j\partial_\beta\,\normqj{\fusedmatrix_j\beta}\big|_{\beta=\estimatort}\,.
\end{equation*}
Plugging Equality~\eqref{eq:assumption} into this display yields
\begin{equation*}
\bold{0}_p\in \sum_{j=1}^\numbergroups \lambda_j\partial_\beta\,\normqj{\fusedmatrix_j\beta}\big|_{\beta=\estimatort}\,.
\end{equation*}
This means that the vector~\estimatort\ minimizes the function $\beta\mapsto \sum_{j=1}^\numbergroups \lambda_j\,\normqj{\fusedmatrix_j\beta}.$ However, since $\tuningparameter_1,\dots,\tuningparameter_\numbergroups>0$, and by assumption on the matrices $\fusedmatrix_1,\dots,\fusedmatrix_\numbergroups,$ this mapping is a norm and is thus minimized only at ${\bf 0}_p.$ Consequently, $\estimatort={\bf 0}_p.$ However,  Equality~\eqref{eq:assumption} then gives 
\begin{equation*}
\outcome=\design\estimatort=\design\bold{0}_p=\bold{0}_n\,,
\end{equation*}
which contradicts $\outcome\neq\bold{0}_n.$ Thus, $\outcome-\design\estimatort\neq \bold{0}_n,$ and it follows that  $\normtwo{\outcome-\design\estimatort}^2\neq 0$. 

Since the function $x\mapsto g(x)$ is continuously differentiable on $(0,\infty)$ with strictly positive derivative, we finally obtain
\begin{equation*}
g'(\,\normtwo{\outcome-\design\estimatort}^2)> 0
\end{equation*}
as desired.
\end{proof}

\subsection{Proof of Lemma~\ref{re:existence}}
\begin{proof}[Proof of Lemma~\ref{re:existence}]
The proof consists of three steps.  First, we show that the solution equals zero if the tuning parameters are large enough. Second, we show that if one element of the tuning parameter is sufficiently large, increasing that element does not affect the estimator. Finally, we use these results to show the existence of suitable tuning parameters.

Let us start with some notation. For each $j\in\{1,\dots,\numbergroups\},$ we define the set  $A_j\subset \{1,2,\dots,p\}$ such that for any $u
\in A_j,$ the $u$th row of $\fusedmatrix_j$ is not zero, that is, $A_j:=\{u\in\{1,\dots,p\}:\max_{v\in\{1,\dots,p\}}|(\fusedmatrix_{j})_{uv}|>0\}.$ By assumption on the sequence  $\fusedmatrix_1,\dots,\fusedmatrix_\numbergroups$, it holds that $\bigcup_{j=1}^{\numbergroups}A_j=\{1,\dots,p\}.$ For $j\in\{1,\dots,k\},$ define the vector $(\design^\top \outcome)^j\in\R^p$  via $(\design^\top \outcome)^j_i:=(\design^\top \outcome)_i\1\{i\in A_j\}\,,$ and set $m:=\max_{j\in\{1,\dots,\numbergroups\}}2 g'(\,\normtwo{\outcome}^2)\,\normqjd{(\design^\top \outcome)^j}\vee 1,$ where $\vee$ denotes the maximum. 

The three mentioned steps now read in detail:
\begin{enumerate}
\item Show that for any $\lambda\in[m,\infty)^\numbergroups$, it holds that
\begin{align*}
\{\bold{0}_p\}=\argmin_{\beta\in\R^p}\big\{g(\,\normtwo{\outcome-\design\beta}^2)+\sum_{j=1}^\numbergroups \lambda_j\,\normqj{\fusedmatrix_j\beta}\big\}\,.
\end{align*}

\item Show that $\design\estimator^{\tilde\tuningparameter}=\design\estimatort$ if  $\tilde\lambda,  \lambda \in(0,\infty)^\numbergroups$  satisfy
\begin{align*}
\tilde\lambda_j
> \lambda_j =m &\text{~~~ if } j\in B \\ 
\tilde\lambda_j=\lambda_j & \text{~~~ if }  j\notin B
\end{align*}
for a non-empty subset $B \subset \{1,2,\dots,\numbergroups\}$.
\item  Show that with probability one, there exists a vector $\lambda\in(0,\infty)^\numbergroups$ that satisfies
 \begin{equation}\label{re:oracleexistence}
  \frac{\lambda_j}{2g'(\,\normtwo{\outcome-\design\estimatort}^2)}=\tuningrelax_j \,\normqjd{(\design \projectionmatrix_j \fusedmatrix_j^{+})^\top\noise}
 \end{equation}
 for all $j\in\{1,\dots,\numbergroups\}$.
\end{enumerate}

Step 1:
We first show that 
\begin{equation*}
\bold{0}_p\in \argmin_{\beta\in\R^p} \big\{ g(\,\normtwo{\outcome-\design\beta}^2)+\sum_{j=1}^\numbergroups\lambda_j\,\normqj{\fusedmatrix_j\beta}\big\}\,
\end{equation*}
implies
\begin{equation*}
\{\bold{0}_p\}=\argmin_{\beta\in\R^p} \big\{ g(\,\normtwo{\outcome-\design\beta}^2)+\sum_{j=1}^\numbergroups\lambda_j\,\normqj{\fusedmatrix_j\beta}\big\}\,.
\end{equation*}
Assume for a $\lambda\in[m,\infty)^\numbergroups$, it holds that
\begin{equation*}
\bold{0}_p\in \argmin_{\beta\in\R^p} \big\{ g(\,\normtwo{\outcome-\design\beta}^2)+\sum_{j=1}^\numbergroups\lambda_j\,\normqj{\fusedmatrix_j\beta}\big\}\,.
\end{equation*}
then by Lemma~\ref{re:convexity}, for any $\estimatort\in\argmin_{\beta\in\R^p} \big\{ g(\,\normtwo{\outcome-\design\beta}^2)+\sum_{j=1}^\numbergroups\lambda_j\,\normqj{\fusedmatrix_j\beta}\big\}\,,$ it holds that $\design \estimatort=\bold{0}_n\,.$
Hence, by assumption on the matrices $\fusedmatrix_1,\dots,\fusedmatrix_\numbergroups,$ it holds for any $\estimatort\neq {\bf 0}_p,$
\begin{align*}
&g(\,\normtwo{\outcome-\design\estimatort}^2)+\sum_{j=1}^\numbergroups\lambda_j\,\normqj{\fusedmatrix_j\estimatort}\\
=&g(\,\normtwo{\outcome}^2)+\sum_{j=1}^\numbergroups\lambda_j\,\normqj{\fusedmatrix_j\estimatort}\\
> &g(\,\normtwo{\outcome}^2)+\sum_{j=1}^\numbergroups\lambda_j\,\normqj{\fusedmatrix_j{\bf 0}_p}\\
=&g(\,\normtwo{\outcome-\design{\bf 0}_p}^2)+\sum_{j=1}^\numbergroups\lambda_j\,\normqj{\fusedmatrix_j{\bf 0}_p}\,.
\end{align*}
This contradicts $\estimatort\in\argmin_{\beta\in\R^p} \big\{ g(\,\normtwo{\outcome-\design\beta}^2)+\sum_{j=1}^\numbergroups\lambda_j\,\normqj{\fusedmatrix_j\beta}\big\}\,,$ and thus, $\estimatort={\bf 0}_p.$  Thus,
\begin{equation*}
\{\bold{0}_p\}=\argmin_{\beta\in\R^p} \big\{ g(\,\normtwo{\outcome-\design\beta}^2)+\sum_{j=1}^\numbergroups\lambda_j\,\normqj{\fusedmatrix_j\beta}\big\}\,
\end{equation*}
as desired.
It is left to show that for any vector $\lambda\in[m,\infty)^\numbergroups$, it holds that
\begin{equation*}
\bold{0}_p\in \argmin_{\beta\in\R^p} \big\{ g(\,\normtwo{\outcome-\design\beta}^2)+\sum_{j=1}^\numbergroups\lambda_j\,\normqj{\fusedmatrix_j\beta}\big\}\,.
\end{equation*}
By the KKT conditions, we have to show that there are vectors $\kappa(j)\in\partial_{\beta}\,\normqj{\fusedmatrix_j\beta}\Big|_{\beta=\bold{0}_p}$ such that 
\begin{equation}
\label{eq:subdiff1}
-2g'(\,\normtwo{\outcome}^2)\design^\top \outcome+\sum_{j=1}^\numbergroups\lambda_j \kappa(j)=\bold{0}_p\,.
\end{equation}
 Define $\tilde A_1:=A_1, \tilde A_j:=A_j\setminus \{A_1,\dots,A_{j-1}\} \text{ for } j=2,\dots,\numbergroups$. In particular, $\tilde A_j\subset A_j$, the $\tilde A_j$'s are disjoint, and $\bigcup_{j=1}^\numbergroups \tilde A_j=\{1,\dots,p\}$. 
With this notation, we need to show that for any $v\in\tilde A_j$ and $j\in\{1,\dots,\numbergroups\}$, it holds that 
\begin{align*}
-2g'(\,\normtwo{\outcome}^2)(\design^\top \outcome)_v+\sum_{j=1}^\numbergroups\lambda_j \kappa(j)_v=0\,.
\end{align*}
Define the vector $\kappa(j)$ for $j\in\{1,\dots,\numbergroups\}$ via
\begin{align*}
\kappa(j)_v:=\begin{cases}
\frac{2g'(\,\normtwo{\outcome}^2)(\design^\top \outcome)^j_v}{\lambda_j} &\text{ if } v\in \tilde A_j \\ 
0 & \text{ if } v\notin \tilde A_j \,.
\end{cases}
\end{align*}
Since $\tilde A_j\subset A_j$ and $\tilde A_j$'s are disjoint, we find for all $j\in\{1,\dots,\numbergroups\}$ and $v\in \tilde A_j,$ that
\begin{align*}
-2g'(\,\normtwo{\outcome}^2)(\design^\top \outcome)_v+\sum_{j=1}^\numbergroups\lambda_j \kappa(j)_v=-2g'(\,\normtwo{\outcome}^2)(\design^\top \outcome)^j_v+\lambda_j \kappa(j)_v\,.
\end{align*}
By definition of the vectors $\kappa(j)$ for $j\in\{1,\dots,\numbergroups\},$ we thus have
\begin{align*}
-2g'(\,\normtwo{\outcome}^2)(\design^\top \outcome)^j_v+\lambda_j \kappa(j)_v=0\,,
\end{align*}
which implies
\begin{align*}
-2g'(\,\normtwo{\outcome}^2)(\design^\top \outcome)_v+\sum_{j=1}^\numbergroups\lambda_j \kappa(j)_v=0\,.
\end{align*}
Since  $\lambda_j\ge m\ge 2g'(\,\normtwo{\outcome}^2)\,\normqjd{ (\design^\top \outcome)^j}$, it also follows by taking the dual norm on both sides that
\begin{equation}\label{eq:boundedlambda}
\normqjd{\kappa(j)}\le \frac{2g'(\,\normtwo{\outcome}^2)\,\normqjd{(\design^\top \outcome)^j}}{\lambda_j}\leq 1\,,
\end{equation}
and hence, $\kappa(j)\in\partial_{\beta}\, \normqj{\fusedmatrix_j\beta}\Big|_{\beta=\bold{0}_p}$ for all $j\in\{1,\dots,\numbergroups\}$\,. So we get
\begin{equation*}
\bold{0}_p\in\argmin_{\beta\in\R^p} \big\{ g(\,\normtwo{\outcome-\design\beta}^2)+\sum_{j=1}^\numbergroups\lambda_j\,\normqj{\fusedmatrix_j\beta}\big\}\,.
\end{equation*}
as desired.
We conclude that 
\begin{equation*}
\{\bold{0}_p\}=\argmin_{\beta\in\R^p} \big\{ g(\,\normtwo{\outcome-\design\beta}^2)+\sum_{j=1}^\numbergroups\lambda_j\,\normqj{\fusedmatrix_j\beta}\big\}\,.
\end{equation*}

Step 2: 
Consider a pair of vectors $\lambda, \tilde \lambda\in (0,\infty)^\numbergroups,$ that satisfy $\tilde \lambda_j> \lambda_j=m$ for $j\in B$ and $\tilde\lambda_j=\lambda_j$ for $j\in \{1,\dots,\numbergroups\}\setminus B.$
For $\lambda$, fix a solution
\begin{equation*}
\estimatort\in \argmin_{\beta\in\R_p} \big\{ g(\,\normtwo{\outcome-\design\beta})+\sum_{j=1}^\numbergroups\lambda_j\,\normqj{\fusedmatrix_j\beta}\big\}
\end{equation*}
with corresponding subdifferentials $\kappa(j)\in\partial_{\beta} \,\normqj{\fusedmatrix_j\beta}\Big|_{\beta=\estimatort}$ that satisfy the KKT conditions
\begin{align*}
-2g'(\,\normtwo{\outcome-\design\estimatort}^2)\design^\top (\outcome-\design\estimatort)+\sum_{j=1}^\numbergroups \lambda_j \kappa(j)=\bold{0}_p\,.
\end{align*}
We first need to show that 
\begin{equation*}
\estimatort\in \argmin_{\beta\in\R_p} \big\{ g(\,\normtwo{\outcome-\design\beta})+\sum_{j=1}^\numbergroups\tilde\lambda_j\,\normqj{\fusedmatrix_j\beta}\big\}\,.
\end{equation*}
By the KKT conditions, we have to show that there are the vectors $\tilde\kappa(j)\in\partial_{\beta}\, \normqj{\fusedmatrix_j\beta}\Big|_{\beta=\estimatort}$ such that
\begin{align*}
-2g'(\,\normtwo{\outcome-\design\estimatort}^2)\design^\top (\outcome-\design\estimatort)+\sum_{j=1}^\numbergroups\tilde \lambda_j \tilde\kappa(j)=\bold{0}_p\,.
\end{align*}
Define $\tilde \kappa(j)$ for $j\in\{1,\dots,\numbergroups\}$ via
\begin{align*}
\tilde\kappa(j):=\frac{\lambda_j}{\tilde \lambda_j}\kappa(j) \,.
\end{align*}
Plugging this into the previous display yields
\begin{align*}
-2g'(\,\normtwo{\outcome-\design\estimatort}^2)\design^\top (\outcome-\design\estimatort)+\sum_{j=1}^\numbergroups\tilde \lambda_j \tilde \kappa(j)=-2g'(\,\normtwo{\outcome-\design\estimatort}^2)\design^\top( \outcome-\design\estimatort)+\sum_{j=1}^\numbergroups\lambda_j \kappa_j\,.
\end{align*}
Therefore,  it holds that
\begin{align*}
-2g'(\,\normtwo{\outcome-\design\estimatort}^2)\design^\top (\outcome-\design\estimatort)+\sum_{j=1}^\numbergroups\tilde \lambda_j \tilde \kappa(j)=\bold{0}_p\,.
\end{align*}
Moreover, by definition of $\tilde \kappa(j)$ and Inequality~\eqref{eq:boundedlambda}, we have that 
\begin{equation*}
\normqjd{\tilde\kappa(j)}=\frac{\lambda_j}{\tilde \lambda_j}\,\normqjd{\kappa(j)} \leq \normqjd{\kappa(j)}\le 1\,,
\end{equation*}
for all $j\in B$. So,  $\tilde \kappa(j)\in\partial_{\beta}\, \normqj{\fusedmatrix_j\beta}\Big|_{\beta=\estimatort}$ for any $j\in\{1,\dots,\numbergroups\}$.
Hence, it holds that
\begin{equation*}
\estimatort\in \argmin_{\beta\in\R^p} \big\{ g(\,\normtwo{\outcome-\design\beta})+\sum_{j=1}^\numbergroups\tilde\lambda_j\,\normqj{(\fusedmatrix_j\beta)^j}\big\}\,.
\end{equation*}
This gives $\design\estimator^{\tilde\tuningparameter}=\design\estimatort$  by  Lemma~\ref{re:convexity}.\vspace{2mm}
  
Step 3: Finally, we show the existence of a $\lambda\in (0,\infty)^\numbergroups$ that satisfies Equality~\eqref{re:oracleexistence}. For this, we define
\begin{equation*}
  a:=m\wedge \left(2\linkfunction'(\,\normtwo{\outcome}^2)\min_{j\in\{1,\dots,\numbergroups\}} \tuningrelax_j\,\norm{(\design \projectionmatrix_j\fusedmatrix_j^{+})^\top\noise}_{q_j}^*\right)\,.
\end{equation*}
By assumption on the noise~$\noise,$ it holds that $a>0$ with probability one. Next, we consider the function 
\begin{align*}
f: [a,m]^\numbergroups&\to \R^\numbergroups\\
\lambda&\mapsto f(\tuningparameter):=2g'(\,\normtwo{\outcome-\design\estimator^{\lambda}}^2)\left(\tuningrelax_1\,\norm{(\design \projectionmatrix_1\fusedmatrix_1^{+})^\top\noise}_{q_1}^*,\dots,\tuningrelax_k\,\norm{(\design \projectionmatrix_k\fusedmatrix_k^{+})^\top\noise}_{q_k}^*\right)^\top.
\end{align*}
Note first that $\linkfunction(\,\normtwo{\outcome-\design\estimatort}^2)\leq \linkfunction(\,\normtwo{\outcome}^2)$ by definition of the estimator~\estimatort.  Hence, since  $\linkfunction'$ is non-increasing, we find
\begin{align*}
 \min_{\lambda\in[a,m]^\numbergroups}  \min_{j\in\{1,\dots,\numbergroups\}} f(\lambda)_j\geq a\,.
\end{align*}
Note also that $[a,m]^\numbergroups$ is compact and that $f$ is continuous by Lemmas~\ref{re:functions} and~\ref{re:nonzero} and the assumption that $g'(x)$  is continuous on $(0,\infty)$. It therefore holds that 
\begin{equation*}
\sup_{\lambda\in[a,m]^\numbergroups} \normsup{f(\lambda)} \leq b
\end{equation*}
for some  $b\in(0,\infty).$ Using this and Step 2, we find that the function $f$ and the function
\begin{align*}
h: [a,b\vee m]^\numbergroups&\to [a,b\vee m]^\numbergroups\\
\lambda&\mapsto h(\tuningparameter):=2g'(\,\normtwo{\outcome-\design\estimator^{\lambda}}^2)\left(\tuningrelax_1\,\norm{(\design \projectionmatrix_1\fusedmatrix_1^{+})^\top\noise}_{q_1}^*,\dots,\tuningrelax_k\,\norm{(\design \projectionmatrix_k\fusedmatrix^{+}_k)^\top\noise}_{q_k}^*\right)^\top
\end{align*}
have the same images, that is,
\begin{align*}
\left\{y: y=f(\lambda),\lambda\in[a,m]^\numbergroups \right\}=\left\{y: y=h(\lambda),\lambda\in[a,b\vee m]^\numbergroups \right\}\,.
\end{align*}
The function  $h$ is continuous. Moreover, $[a,b\vee m]^\numbergroups$ is a compact and convex subset of~$\R^k$. We can thus apply  Brouwer's fixed-point theorem to deduce that 
\begin{align*}
\lambda=2g'(\,\normtwo{\outcome-\design\estimator^{\lambda}}^2)\left(\tuningrelax_1\,\norm{(\design \projectionmatrix_1\fusedmatrix_1^{+})^\top\noise}_{q_1}^*,\dots,\tuningrelax_k\,\norm{(\design \projectionmatrix_k\fusedmatrix_k^{+})^\top\noise}_{q_k}^*\right)^\top
\end{align*}
for a vector  $\lambda\in [a,b\vee m]^\numbergroups$. According to Lemma~\ref{re:nonzero}, it holds  that $g'(\,\normtwo{\outcome-\design\estimatort}^2)>0$ with probability one, so that
 \begin{equation*}
   \frac{\tuningparameter}{2g'(\,\normtwo{\outcome-\design\estimator^{\lambda}}^2)}=\left(\tuningrelax_1\,\norm{(\design  \projectionmatrix_1\fusedmatrix_1^{+})^\top \noise}_{q_1}^*,\dots, \tuningrelax_k\,\norm{(\design  \projectionmatrix_\numbergroups\fusedmatrix_\numbergroups^{+})^\top \noise}_{q_\numbergroups}^*\right)^\top
  \end{equation*}
as desired.
\end{proof}

\subsection{Proof of Theorem~\ref{r:finalbound}}


\label{page:argument}
\begin{proof}[Proof of Theorem~\ref{r:finalbound}]
Consider the function
\begin{align*}
\R^p&\to \R\\
\beta&\mapsto f(\beta):=g(\,\normtwo{\outcome-\design\beta}^2)+\sum_{j=1}^{\numbergroups}\lambda_j\,\normqj{\fusedmatrix_j\beta}\,.
\end{align*}
According to our assumptions on the link function and the penalties, $f$ is convex, and $\estimatort$ minimizes $f.$  
Therefore, ${\bf 0}_p\in\partial f(\beta)|_{\beta=\estimatort}.$ 
Subdifferentials are additive, so that we can write $\partial f(\beta)|_{\beta=\estimatort}$ as a sum of subdifferentials of the individual parts of $f.$ In particular, we can decompose ${\bf 0}_p\in\partial f(\beta)|_{\beta=\estimatort}$ as 
\begin{equation*}
  {\bf 0}_p= \frac{\partial}{\partial \beta}g(\,\normtwo{\outcome-\design\beta}^2)\Big|_{\beta=\estimatort}+\sum_{j=1}^\numbergroups\tuningparameter_j\kappa_j\,,
\end{equation*}
where $\kappa_j\in\partial\{\alpha\mapsto \normqj{M_{j}\alpha}\}|_{\alpha=\estimatort}$ and, using the assumption that the function $g$ is differentiable on $(0,\infty),$
\begin{equation*}
 \frac{\partial}{\partial \beta}g(\,\normtwo{\outcome-\design\beta}^2)\Big|_{\beta=\estimatort}=-2 g'(\,\normtwo{\outcome-\design\estimatort}^2)\big(\design^{\top}(\outcome-\design\estimatort)\big)\,.
\end{equation*}
Adding the pieces together implies for all $\beta\in\R^p$
\begin{equation*}
  0={\bf 0}_p^\top(\genbeta-\estimatort)=\big(-2 g'(\,\normtwo{\outcome-\design\estimatort}^2)\big(\design^{\top}(\outcome-\design\estimatort)\big)+\sum_{j=1}^\numbergroups\tuningparameter_j\kappa_j\big)^\top(\genbeta-\estimatort)\,.
\end{equation*}
Now, $\kappa_j\in\partial\{\alpha\mapsto \normqj{M_{j}\alpha}\}|_{\alpha=\estimatort},$ which means by the definition of subdifferentials for convex functions that  for all $\genbeta\in \R^p$
\begin{equation*}
\normqj{\fusedmatrix_{j}\genbeta}\ge \normqj{\fusedmatrix_{j}\estimatort}+\inprod{\kappa_j}{\genbeta-\estimatort}\,,
\end{equation*}
which is equivalent to
\begin{align*}
\normqj{\fusedmatrix_j\genbeta}-\normqj{\fusedmatrix_j\estimatort}\ge \kappa_j^\top (\genbeta-\estimatort)\,.
\end{align*}
Combining with the above yields
\begin{align*}
&-2g'(\,\normtwo{\outcome-\design\estimatort}^2) \big(\design^\top(\outcome-\design\estimatort)\big)^\top(\genbeta-\estimatort)
+\sum_{j=1}^{\numbergroups}\lambda_j\big(\,\normqj{ \fusedmatrix_j \genbeta}-\normqj{ \fusedmatrix_j\estimatort}\big)\,\geq\,0 \,. 
\end{align*}
According to the model~\eqref{eq:model}, we can replace $\outcome$ with  $\design\betatrue+\noise$ to obtain
\begin{align*}
&\big(\design^\top(\outcome-\design\estimatort)\big)^\top(\genbeta-\estimatort)\\
 =&\big(\design^\top(\design\betatrue+\noise-\design\estimatort)\big)^\top(\genbeta-\estimatort)\\
 = &(\design(\betatrue-\estimatort))^\top \design(\genbeta-\estimatort)+\noise^\top \design(\genbeta-\estimatort)\\
= &(\design(\betatrue-\estimatort))^\top \design(\betatrue-\estimatort+\genbeta-\betatrue)+\noise^\top \design(\genbeta-\estimatort)\\
 =& \,\normtwo{\design(\betatrue-\estimatort)}^2+ (\design(\betatrue-\estimatort))^\top \design(\genbeta-\betatrue)+ \noise^\top \design(\genbeta-\estimatort)\,.
\end{align*}
Now, we note that for any $\genconstant>0,$
  \begin{equation*}   
 (\design(\betatrue-\estimatort))^\top \design(\genbeta-\betatrue)
\geq -\genconstant\,\normtwo{\design(\betatrue-\estimatort)}^2-\frac{\normtwo{\design(\genbeta-\betatrue)}^2}{4\genconstant}\,.
  \end{equation*}
Using this in the foregoing display and consolidating gives us
  \begin{equation*}
    (\design^\top(\outcome-\design\estimatort))^\top(\genbeta-\estimatort)\geq \,(1-\genconstant)\,\normtwo{\design(\betatrue-\estimatort)}^2-\frac{\normtwo{\design(\genbeta-\betatrue)}^2}{4\genconstant} + \noise^\top \design(\genbeta-\estimatort)\,.
  \end{equation*}
Plugging this back into the earlier display yields, noting that $\linkfunction'$ is a positive function by assumption,
\begin{multline*}
 -2g'(\,\normtwo{\outcome-\design\estimatort}^2)\big((1-\genconstant)\, \normtwo{\design(\betatrue-\estimatort)}^2-\frac{\normtwo{\design(\genbeta-\betatrue)}^2}{4\genconstant} +\langle \noise, \design(\genbeta-\estimatort)\rangle\big)\\
+\sum_{j=1}^{\numbergroups}\lambda_j\big(\,\normqj{ \fusedmatrix_j \genbeta}-\normqj{\fusedmatrix_j\estimatort}\big)\ge 0\,.
\end{multline*}
Rearranging this inequality, we obtain
\begin{multline*}
2g'(\,\normtwo{\outcome-\design\estimatort}^2)\big((1-\genconstant)\, \normtwo{\design(\betatrue-\estimatort)}^2+\langle \noise, \design(\genbeta-\estimatort)\rangle \big)\\ \le \sum_{j=1}^{\numbergroups}\lambda_j\big(\,\normqj{ \fusedmatrix_j \genbeta}-\normqj{\fusedmatrix_j\estimatort}\big)+2g'(\,\normtwo{\outcome-\design\estimatort}^2)\,\frac{\normtwo{\design(\genbeta-\betatrue)}^2}{4\genconstant}\,.
\end{multline*}
According to Lemma~\ref{re:nonzero},  we can divide both sides by $2g'(\,\normtwo{\outcome-\design\estimatort}^2)$, so that
\begin{multline*}
 (1-\genconstant)\,\normtwo{\design(\betatrue-\estimatort)}^2+\langle \noise, \design(\genbeta-\estimatort)\rangle \\
\le \sum_{j=1}^{\numbergroups}\frac{\lambda_j}{2g'(\,\normtwo{\outcome-\design\estimatort}^2)}\big(\,\normqj{ \fusedmatrix_j \genbeta}-\normqj{ \fusedmatrix_j\estimatort}\big)+\frac{\normtwo{\design(\genbeta-\betatrue)}^2}{4\genconstant}
\end{multline*}
and
\begin{multline*}
(1-\genconstant)\, \normtwo{\design(\betatrue-\estimatort)}^2 \\
\le \langle \noise, \design(\estimatort-\genbeta)\rangle+ \sum_{j=1}^{\numbergroups}\frac{\lambda_j}{2g'(\,\normtwo{\outcome-\design\estimatort}^2)}(\,\normqj{ \fusedmatrix_j \genbeta}-\,\normqj{\fusedmatrix_j\estimatort})+\frac{\normtwo{\design(\genbeta-\betatrue)}^2}{4\genconstant}
\end{multline*}
with probability one.
Recall that by Equation~\eqref{eq:partition},  the vector $\estimatort-\genbeta$ can be rewritten as
\begin{align*}
\estimatort-\genbeta=\sum_{j=1}^\numbergroups \projectionmatrix_{j} \fusedmatrix_j^+\fusedmatrix_{j} (\estimatort-\genbeta)\,.
\end{align*}
So we can reorganize the inner product $\langle \noise, \design(\estimatort-\genbeta) \rangle$ according to
\begin{align*}
 \langle \noise, \design(\estimatort-\genbeta)\rangle
&= \langle \noise, \design \sum_{j=1}^\numbergroups \projectionmatrix_{j} \fusedmatrix_j^+\fusedmatrix_{j} (\estimatort-\genbeta) \rangle\\
&= \sum_{j=1}^\numbergroups \langle \noise, \design\projectionmatrix_{j} \fusedmatrix_j^+\fusedmatrix_{j} (\estimatort-\genbeta)\rangle\\
&=\sum_{j=1}^\numbergroups  \langle  (\design  \projectionmatrix_j\fusedmatrix_{j}^+)^\top\noise, \fusedmatrix_{j} (\estimatort-\genbeta)\rangle\,.
\end{align*}
Plugging this into the previous inequality yields
\begin{align*}
&(1-\genconstant)\,\normtwo{\design(\betatrue-\estimatort)}^2 \\      
\le & \sum_{j=1}^\numbergroups\big( \langle  (\design   \projectionmatrix_j \fusedmatrix_j^{+})^\top\noise, \fusedmatrix_j (\estimatort-\genbeta)\rangle+\frac{\lambda_j}{2g'(\,\normtwo{\outcome-\design\estimatort}^2)}(\,\normqj{\fusedmatrix_j \genbeta}-\,\normqj{\fusedmatrix_j\estimatort})\big)+\frac{\normtwo{\design(\genbeta-\betatrue)}^2}{4\genconstant}\,.
\end{align*}
Using H\"older's Inequality, we can rewrite the first term in the second line according to
\begin{align*}
 \langle  (\design  \projectionmatrix_j \fusedmatrix_j^{+})^\top\noise, \fusedmatrix_j (\estimatort-\genbeta)\rangle \le \normqjd{(\design   \projectionmatrix_j \fusedmatrix_j^{+})^\top\noise}\,\normqj{\fusedmatrix_j\estimatort-\fusedmatrix_j \genbeta}\,.
\end{align*}
Plugging this back into the previous display gives
\begin{multline*}
(1-\genconstant)\,\normtwo{\design(\betatrue-\estimatort)}^2 
\leq  \sum_{j=1}^\numbergroups\big(\,\normqjd{(\design  \projectionmatrix_j \fusedmatrix_j^{+})^\top\noise}\,\normqj{\fusedmatrix_j\estimatort-\fusedmatrix_j \genbeta}\\+\frac{\lambda_j}{2g'(\,\normtwo{\outcome-\design\estimatort}^2)}(\,\normqj{\fusedmatrix_j \genbeta}-\,\normqj{\fusedmatrix_j\estimatort})\big)
+\frac{\normtwo{\design(\genbeta-\betatrue)}^2}{4\genconstant}\,.
\end{multline*}
We can modify this further by applying the triangle inequality and by reorganizing the terms. We find
\begin{align*}
&(1-\genconstant)\,\normtwo{\design(\betatrue-\estimatort)}^2 \\ 
\le & \sum_{j=1}^\numbergroups\big(\,\normqjd{(\design  \projectionmatrix_j \fusedmatrix_j^{+})^\top\noise}(\,\normqj{\fusedmatrix_j\estimatort}+\,\normqj{\fusedmatrix_j \genbeta})+\frac{\lambda_j}{2g'(\,\normtwo{\outcome-\design\estimatort}^2)}(\,\normqj{\fusedmatrix_j \genbeta}-\,\normqj{\fusedmatrix_j\estimatort})\big)\\
&+\frac{\normtwo{\design(\genbeta-\betatrue)}^2}{4\genconstant}\\
=& \sum_{j=1}^\numbergroups\big(\,\normqjd{(\design  \projectionmatrix_j \fusedmatrix_j^{+})^\top\noise}+\frac{\lambda_j}{2g'(\,\normtwo{\outcome-\design\estimatort}^2)}\big)\,\normqj{\fusedmatrix_j \genbeta}\\
& +\sum_{j=1}^\numbergroups\big(\,\normqjd{(\design \projectionmatrix_j \fusedmatrix_j^{+})^\top\noise}-\frac{\lambda_j}{2g'(\,\normtwo{\outcome-\design\estimatort}^2)}\big)\,\normqj{\fusedmatrix_j\estimatort}+\frac{\normtwo{\design(\genbeta-\betatrue)}^2}{4\genconstant}\,.
\end{align*}
We now set $\lambda$ according to Lemma~\ref{re:existence}. It then holds that\begin{align*}
\frac{\lambda_j}{2g'(\,\normtwo{\outcome-\design\estimatort}^2)}= \tuningrelax_j\,\normqjd{(\design  \projectionmatrix_j\fusedmatrix_j^{+})^\top\noise}
\end{align*}
 for all $j\in\{1,\dots,\numbergroups\}.$
 Then it follows that
 \begin{multline*}
(1-\genconstant)\,\normtwo{\design(\betatrue-\estimatort)}^2\\
 \le \sum_{j=1}^{\numbergroups}(1+\tuningrelax_j)\, \normqjd{(\design  \projectionmatrix_j\fusedmatrix_j^{+})^\top\noise} \,  \normqj{\fusedmatrix_j \genbeta}+\sum_{j=1}^{\numbergroups}(1-\tuningrelax_j)\, \normqjd{(\design  \projectionmatrix_j\fusedmatrix_j^{+})^\top\noise} \,  \normqj{\fusedmatrix_j \estimatort}+\frac{\normtwo{\design(\betatrue-\genbeta)}^2}{4\genconstant}\,.
\end{multline*}
To bring this into the standard form, assuming $\genconstant<1,$ we finally divide both sides by $(1-\genconstant)n$ and find
\begin{multline*}
\frac{1}{n}\,\normtwo{\design(\betatrue-\estimatort)}^2\le \frac{1}{n}\sum_{j=1}^{\numbergroups}\frac{1+\tuningrelax_j}{1-\genconstant}\,  \normqjd{(\design  \projectionmatrix_j\fusedmatrix_j^{+})^\top\noise}  \, \normqj{\fusedmatrix_j \genbeta}\\+\frac{1}{n}\sum_{j=1}^{\numbergroups}\frac{1-\tuningrelax_j}{1-\genconstant}\, \normqjd{(\design  \projectionmatrix_j\fusedmatrix_j^{+})^\top\noise} \,  \normqj{\fusedmatrix_j \estimatort}+\frac{1}{4\genconstant(1-\genconstant)n}\,\normtwo{\design(\betatrue-\genbeta)}^2\,.
\end{multline*}
Since $\genbeta\in\R^p$ and $\genconstant\in(0,1)$ were arbitrary, this inequality provides us with the desired statement.\label{page:last}

\end{proof}

\end{document}